\newtheorem{theorem}{Theorem} [section]
\newtheorem{lemma}[theorem]{Lemma}
\newenvironment{proof}[1][Proof]{\textbf{#1.} }{\ \rule{0.5em}{0.5em}}
\numberwithin{equation}{section}
\begin{document}

\bigskip

\textbf{THE EXISTENCE OF A SMOOTH INTERFACE IN THE EVOLUTIONARY
ELLIPTIC MUSKAT--VERIGIN PROBLEM WITH NONLINEAR SOURCE.}

\bigskip

\textbf{  S.P.Degtyarev}

\bigskip

\textbf{Institute of Applied Mathematics and Mechanics of Ukrainian
National Academy of Sciences,} \textbf{\\74, R. Luxemburg Str.,
Donetsk 83114,  Ukraine }

E-mail: degtyar@i.ua

\bigskip

\textbf{(This is an English translation of the article: \ \ S.P.
Degtyarev, "The existence of a smooth interface in the
Muskat-Verigin elliptic evolution problem with a nonlinear source",
Ukr. Mat. Visn. 7 (2010), no. 3, 301–330,  \ \
http://www.ams.org/mathscinet-getitem?mr=2809065)}

 \begin{flushright} \scriptsize
                           Translated from Russian by V. V. Kukhtin
\end{flushright}

\bigskip

\begin{abstract}
We study the two-phase Muskat--Verigin free-boundary problem for
elliptic equations with nonlinear sources. The existence of a smooth
solution and a smooth free boundary is proved locally in time by
applying the parabolic regularization of a condition on the free
boundary.
\end{abstract}

\bigskip

Key words: Free boundary, Muskat--Verigin problem, classical
solution, smooth interface.

\bigskip

MSC: \ 35R35, 35K65, 35R37, 35K60

\bigskip

\bigskip


\section{Statement of the problem and the main result} \label{s1}

The present work is devoted to the study of a boundary-value problem
with unknown boundary describing the process of multidimensional
nonstationary filtration of two fluids in a porous medium
\cite{D1,D2,D3,D4,D5,D6} under assumption that the densities of
these fluids are practically independent of the pressure and, hence,
are constant values. Namely such a situation happens, for example,
in the process of filtration of two unmixed noncompressible fluids
(oil, water) at the displacement of a fluid by a silicate solution,
etc.

A mathematical model of such a problem is the evolutionary problem
with free boundary for elliptic equations.

A three-dimensional problem of such a kind without bulk sources was
considered in \cite{D6}. The goal of the present work is to
generalize results in \cite{D6} to the case of a space with any
dimension and to take the effect of nonlinear bulk sources into
account. As distinct from \cite{D6}, we will study the corresponding
linearized problem within the method of parabolic regularization of
the condition on a free boundary. This method taken by us from
\cite{M} was applied to the given problem in \cite{DN4,DN5}.

We note that the Muskat--Verigin problem для parabolic equations was
studied in \cite{BS} in details and was analyzed in \cite{D17}.
Moreover, with regard for the Gibbs--Thomson condition, the
Muskat--Verigin problem for parabolic equations was considered in
\cite{DN1, DN2, DN3}.

We note also that, in the case of two spatial variables (i.e., at
$N=2$), the problem under study was considered in \cite{DN4,DN5}
with the use of the same regularization as in \cite{M} and in the
present work. Therefore, we will consider that the dimension of the
space $\mathbb R^{N}$ satisfies the condition $N\geq 3$ and use the
method of work \cite{D9}.

Let $\Omega$ be a doubly connected domain in $\mathbb R^{N}$ with
the boundary $\partial\Omega= \Gamma^{+}\cup \Gamma^{-},$ where
$\Gamma^{\pm}$ are smooth closed surfaces without
self-intersections, $\Gamma(\tau)$ and $\tau\in [0,T]$ are smooth
closed surfaces without self-intersections that lie between
$\Gamma^{\pm}$ and divide the domain $\Omega$ into two doubly
connected domains $\Omega_{\tau}^{\pm}.$ Moreover, $\partial
\Omega_{\tau}^{\pm}=\Gamma(\tau)\cup \Gamma^{\pm},$ and the surface
$\Gamma(0)\equiv \Gamma$ is given.

In the domains $\Omega_{\tau}^{\pm},$ we consider the following
boundary-value problem for the unknown functions $u^{\pm}(y,\tau)$
and the unknown surfaces $\Gamma(\tau)$ with the conditions
\begin{equation}
L_{0}u^{\pm}\equiv \triangle u^{\pm}(y,\tau)=f^{\pm}(u^{\pm}), \quad
y\in \Omega_{\tau}^{\pm}, \label{1.1}
\end{equation}
\begin{equation}
u^{+}|_{\Gamma(\tau)}= u^{-}|_{\Gamma(\tau)}, \label{1.2}
\end{equation}
\begin{equation}
a^{+}(\nabla u^{+},\overrightarrow{N})|_{\Sigma_{T}}=a^{-}(\nabla
u^{-},\overrightarrow{N})|_{\Sigma_{T}}=m\cos
(\overrightarrow{N},\tau), \label{1.3}
\end{equation}
\begin{equation}
u^{\pm}|_{\Gamma_{T}^{\pm}}= g^{\pm}(y,\tau), \label{1.4}
\end{equation}
\begin{equation}
\Gamma(0)=\Sigma_{T}\cap \{t=0\}=\Gamma, \label{1.5}
\end{equation}
\begin{equation}
u^{\pm}(y,0)= u_{0}^{\pm}(y), \quad y\in \Omega^{\pm}. \label{1.5}
\end{equation}
Here, $\Omega^{\pm}$ are the domains into those $\Omega$ is divided
by the initial surface $\Gamma=\Gamma(0),$
$\Gamma_{T}^{\pm}=\Gamma^{\pm}\times [0,T],$ $\Gamma_{T}=\Gamma
\times [0,T],$  $f^{\pm}(u),$ $g^{\pm}(y,\tau),$ and
$u_{0}^{\pm}(y)$ are the given functions, $a^{\pm}$ and $m$ are
positive constants, $\Sigma_{T}=\{(y,\tau):  \tau\in [0,T], \ y\in
\Gamma(\tau) \}$ is a surface in $\mathbb R^{N}\times [0,T],$
$\overrightarrow{N}$ is a normal to $\Sigma_{T}$ in the space
$\mathbb R^{N}\times [0,T]$ that is directed so that its projection
on $\{t= const\}$ is directed toward $\Omega_{\tau}^{+}.$

We note that condition \eqref{1.3} can be presented also in the form
\begin{equation}
a^{+}\frac{\partial u^{+}}{\partial
\overrightarrow{n}(\tau)}=a^{-}\frac{\partial u^{-}}{\partial
\overrightarrow{n}(\tau)} = -mV_{n}, \label{1.6}
\end{equation}
where $V_{n}$ is the velocity of motion of the surface
$\Gamma(\tau)$ along the normal $\overrightarrow{n}(\tau)$ to
$\Gamma(\tau)$ in the space $x\in\mathbb R^{N}$ directed inward
$\Omega_{\tau}^{+}.$ We note that $\overrightarrow{n}(0)$ is a
normal to the initial surface $\Gamma$ denoted below simply by
$\overrightarrow{n}.$

We denote $\Omega_{T}=\Omega\times (0,T)$ and
$\Omega_{T}^{\pm}=\Omega^{\pm}\times (0,T).$ We will use the
standard H\"{o}lder spaces $H^{l}(\overline{\Omega})$ and
$H^{l,l/2}(\overline{\Omega_{T}})$ with a noninteger $l>0$ that were
introduced in \cite{D10} with the norm
\begin{equation*}
 \left|  u\right|
_{\Omega_{T}}^{\left( l\right) }=\sum_{j=0}^{\left[ l\right]
}\sum_{2r+s=j}\left| D_{t}^{r}D_{x}^{s}u\right|  ^{\left(
0\right)  }+\sum_{2r+s=\left[  l\right]  }\left\langle D_{t}^{r}D_{x}%
^{s}u\right\rangle _{x,\Omega_{T}}^{\left(  l-\left[  l\right]
\right) }\\+\sum_{0<l-2r-s<2}\left\langle
D_{t}^{r}D_{x}^{s}u\right\rangle _{t,\Omega_{T}}^{\left(
\frac{l-2r-s}{2}\right)},
\end{equation*}
where  $\left|  D_{t}^{r}D_{x}^{s}u\right|  ^{\left(  0\right)  }%
=\max_{\overline{\Omega_{T}}}\left| D_{t}^{r}D_{x}^{s}u\right|  ,$
$\left\langle u\right\rangle _{x,\Omega_{T}}^{\left( \alpha\right)
}$ and $\left\langle u\right\rangle _{t,\Omega_{T}}^{\left(
\alpha\right) }$ are the H\"{o}lder constants of the function
$u\left( x,t\right) $ in $x$ and in $t,$ respectively.

We also use the following H\"{o}lder spaces.
Let $\alpha,\beta \in (0,1).$ We define a seminorm (see \cite{D7})
\begin{equation*}
\lbrack u]_{\Omega_{T}}^{(\alpha,\beta)}=\sup_{(x,t),(y,\tau)\in
\overline{\Omega_{T}}}\frac{\left|
u(x,t)-u(y,t)-u(x,\tau)+u(y,\tau)\right| }{\left|  x-y\right|
^{\alpha}\left|  t-\tau\right| ^{\beta}},\quad \alpha,\beta\in\left(
0,1\right) .
\end{equation*}

Then we define the spaces $E^{k+\alpha}(\overline{\Omega_T}),\
k=0,1,2,3,$ with a bounded norm
\begin{equation}
|u|^{(k+\alpha,\alpha)}_{\Omega_{T}}=\max_{t\in
[0,T]}|u(\cdot,t)|^{(k+\alpha)}_{\Omega}+\sum_{|r|=0}^{k}\langle
D^{r}_{x}u\rangle^{(\alpha)}_{t,\Omega_{T}}+
\sum_{|r|=0}^{k}[D^{r}_{x}u]^{(\alpha,\alpha)}_{\Omega_{T}}.
\label{A.1}
\end{equation}
We note that a part of terms in definition \eqref{A.1} can be
interpolated in terms of terms of lower and higher orders.
Therefore,
\begin{equation}
|u|^{(k+\alpha,\alpha)}_{\Omega_{T}}\leq C\bigg(\max_{t\in
[0,T]}|u(\cdot,t)|^{(0)}_{\Omega}+\langle
u\rangle^{(\alpha)}_{t,\Omega_{T}}+
\sum_{|r|=k}[D^{r}_{x}u]^{(\alpha,\alpha)}_{\Omega_{T}}\bigg).
\label{A.2}
\end{equation}
We note also that norm \eqref{A.1} is equivalent to the norm
\begin{equation}
|u|^{(k+\alpha,\alpha)}_{\Omega_{T}}\sim \max_{t\in
[0,T]}|u(\cdot,t)|^{(0)}_{\Omega}+\langle
u\rangle^{(\alpha)}_{t,\Omega_{T}}\\+ \sum_{|r|=k}\sup_{0<h<1}
\left\langle\frac{D^{r}_{x}(x,t+h)-D^{r}_{x}(x,t)}{h^{\alpha}}
\right\rangle ^{(\alpha)}_{x,\Omega_{T}}. \label{A.3}
\end{equation}


With the use of a local parametrization, we can define smooth
surfaces from the above-indicated classes and the corresponding
spaces of smooth functions on these surfaces in the standard way.

We also define a space $P^{k+\alpha}(\overline{\Gamma_{T}}),$
$k=2,3,$ with the norm
\begin{equation}
|u|_{P^{k+\alpha}(\overline{\Gamma_{T}})}=
|u|_{E^{k+\alpha}(\overline{\Gamma_{T}})}+|u_{t}|_{E^{1+\alpha}(\overline{\Gamma_{T}})}.
\label{A.4}
\end{equation}


We made the following assumptions about data of problem
\eqref{1.1}--\eqref{1.5}:
\begin{equation}
\Gamma^{\pm}, \Gamma \in H^{4+\alpha}, \quad g^{\pm}\in
H^{4+\alpha,\frac{4+\alpha}{2}}(\Gamma_{T}^{\pm}), \quad
u_{0}^{\pm}(x)\in H^{4+\alpha}(\overline{\Omega^{\pm}}),
\label{1.7a}
\end{equation}
\begin{equation}
 f^{\pm}(u)\in
C^{3}_{loc}(\mathbb R^{1}), \quad (f^{\pm})'(u)\geq \nu >0.
\label{1.7}
\end{equation}
In addition, we suppose that the conditions of consistency hold for
problem \eqref{1.1}--\eqref{1.5} that mean that
\begin{equation}
u_{0}^{\pm}|_{\Gamma^{\pm}}=g^{\pm}(y,0), \quad
u_{0}^{+}|_{\Gamma}=u_{0}^{-}|_{\Gamma}, \quad a^{+}\frac{\partial
u_{0}^{+}}{\partial \overrightarrow{n}}|_{\Gamma}=
a^{-}\frac{\partial u_{0}^{-}}{\partial
\overrightarrow{n}}|_{\Gamma},\label{1.8}
\end{equation}
\begin{equation}
\triangle u_{0}^{\pm}(y)=f^{\pm}(u_{0}^{\pm}(y)),\quad y\in
\overline{\Omega^{\pm}},
 \label{1.8aaa}
\end{equation}
where $\overrightarrow{n}$ is a normal to $\Gamma$ directed toward
$\Omega^{+}.$ We note that condition \eqref{1.8aaa} is, in essence,
the assumption for the right-hand sides $f^{\pm}(u)$ that means that
the corresponding boundary-value problem for Eq. \eqref{1.8aaa} is
solvable. We also assume that the problem is nondegenerate, namely,
\begin{equation}
\frac{\partial u_{0}^{\pm}}{\partial \overrightarrow{n}}\geq \nu
>0, \quad \frac{\partial u_{0}^{+}}{\partial \overrightarrow{n}}-
\frac{\partial u_{0}^{-}}{\partial \overrightarrow{n}}\geq \nu
>0, \quad y\in \Gamma,
\label{1.9}
\end{equation}
where $\nu$ is some positive constant.

In what follows, by $C,$ $b,$ $\nu,$ and $\gamma,$ we denote all
absolute constants or constants depending only on once and for all
fixed data of the problem.


In order to formulate the main result, we introduce a
parametrization of the unknown boundary with the help of some
unknown function \cite{D8} and reduce the initial problem to a
problem in a fixed domain. In so doing, in a sufficiently small
neighborhood $\mathcal{N}$ of the surface $\Gamma,$ we introduce the
coordinates $(\omega,\lambda),$ where $\omega$ are the coordinates
on the surface $\Gamma,$ $\lambda \in R,$ and $|\lambda|\leq
\lambda_{0}$ so that if $x\in \mathcal{N},$ then
\begin{equation}
x=x_{\Gamma}(\omega)+\lambda\overrightarrow{n}(\omega)=x(\omega,\lambda),\quad\left|
\lambda\right|  \leq\lambda_{0} \label{1.8a}%
\end{equation}
in a unique way, where $x_{\Gamma}(\omega)\in \Gamma,$ and $\lambda$
is a deviation of the point $x$ from the surface $\Gamma$ along the
normal $\overrightarrow{n}$ to $\Gamma$ directed, we recall, inward
$\Omega^{+}.$

Let $\rho(\omega,t)$ be a sufficiently small-value function defined
on $\Gamma_{T}=\Gamma \times [0,T],$ $\rho(\omega,0)\equiv 0.$ Then
the parametrization
\[
x=x_{\Gamma}(\omega)+\overrightarrow{n}(\omega)\rho(\omega,t)
\]
at every $t\in [0,T]$ sets some surface $\Gamma_{\rho}(t)$ dividing
the domain $\Omega$ into two subdomains $\Omega_{\rho}^{+}$ and
$\Omega_{\rho}^{-}.$ We denote a surface in $\Omega_{T}\equiv \Omega
\times [0,T]$ by $\Gamma_{\rho,T}\equiv \cup_{t\in
[0,T]}\Gamma_{\rho}(t)\times \{t\}.$ By $\Omega_{\rho,T}^{\pm},$ we
denote those domains, into those the surface $\Gamma_{\rho,T}$
divides the domain $\Omega_{T}.$ We assume (and will prove it below)
that the unknown surface $S_{T}=\Gamma_{\rho,T}$ with some unknown
function $\rho.$

Let a function $\chi (\lambda)\in C^{\infty}$ be such that
$\chi(0)=1,$ $0\leq \chi(\lambda)\leq 1,$ $\chi(\lambda) \equiv 0$
at $|\lambda|\geq \lambda_{0},$ $|\chi'|\leq 2/\lambda_{0},$ where
$\lambda_{0}$ is a number from relation \eqref{1.8a}. Let also
$\rho(\omega,t)$ be a function of the class
$S^{2+\alpha}(\Gamma_{T})$ and such that $|\rho|\leq \lambda_{0}/4.$
We define the mapping $(x,t)\rightarrow (y,\tau)$ of the domain
$\Omega_{T}$ onto itself by the formula
\begin{equation}
e_{\rho}:%
\begin{cases}y=\begin{cases}x_{\Gamma}(\omega(x))+\overrightarrow
{n}(\omega(x))(\lambda(x)+\chi(\lambda(x))\rho(\omega(x),t))=x+\overrightarrow{n}%
(\omega(x))\chi(\lambda(x))\rho(\omega,t), & x\in
\mathcal{N},\\
x,& x\notin \mathcal{N},
\end{cases}\\
{\tau=t.}%
\end{cases}
\label{2.1a}%
\end{equation}
Thus, the spatial coordinates $(\omega,\lambda)$ of points
$y=y(x,t)$ and $x$ in a neighborhood $\mathcal{N},$ where the
mapping $e_{\rho}$ differs from the identity one, are connected by
the relations
\[
\omega(y)=\omega(x),\quad\lambda(y)=\lambda(x)+\chi(\lambda(x))\rho
(\omega(x),t). 
\]
It is easy to see that the mapping $e_{\rho}$ transfers bijectively
the domains $\overline{\Omega_{T}^{\pm
}}$ onto the domains $\overline{\Omega_{\rho,T}^{%
\pm }}.$ Moreover, since $\rho(\omega,0)\equiv 0,$
$e_{\rho}(x,0)\equiv (x,0).$ For simplicity, we will denote the
functions  $u^{\pm}$ after the change of variables by the same
symbol, i.e.,
\[
u^{ \pm
}(x,t)\equiv u^{%
\pm }(y,\tau)\circ e_{\rho}(x,t),
\]
and $u^{ \pm}(x,t)$ are already defined in the known fixed domains
$\overline{\Omega_{T}^{ \pm }}$ .


Let us change the variables $(y,\tau)=e_{\rho}(x,t)$ in problem
\eqref{1.1}--\eqref{1.5}. We arrive at the equivalent formulation of
the problem, but already in fixed domains:
\begin{equation}
L_{\rho}u^{\pm}\equiv
\nabla_{\rho}^{2}u^{\pm}(x,t)=f^{\pm}(u^{\pm}), \quad (x,t)\in
\Omega_{T}^{\pm},
 \label{1.12}%
\end{equation}
\begin{equation}
u^{+}(x,t)-u^{-}(x,t)=0, \quad (x,t)\in \Gamma_{T},
 \label{1.13}%
\end{equation}
\begin{equation}
\rho_{t}(\omega,t)+a^{\pm}S(\omega,\rho,\nabla_{\omega}\rho)\frac{\partial
u^{\pm}}{\partial
\overrightarrow{n}}+a^{\pm}\sum_{i=1}^{N}S_{i}(\omega,\rho,\nabla_{\omega}\rho)\frac{\partial
u^{\pm}}{\partial \omega_{i}}=0, \quad (x,t)\in \Gamma_{T},
 \label{1.14}%
\end{equation}
\begin{equation}
u^{\pm}(x,t)=g^{\pm}(x,t), \quad (x,t)\in \Gamma_{T}^{\pm},
 \label{1.15}%
\end{equation}
\begin{equation}
u^{\pm}(x,0)=u_{0}^{\pm}(x), \quad x \in \overline{\Omega^{\pm}},
 \label{1.16}%
\end{equation}
where $\nabla_{\rho}=J_{\rho}\nabla,$ and $J_{\rho}$ is the matrix
inverse and conjugate to the Jacobi matrix $\partial y/\partial x.$
Here, $S(\omega,\rho,\nabla_{\omega}\rho)$ and
$S_{i}(\omega,\rho,\nabla_{\omega}\rho)$ are smooth functions of the
arguments. Moreover,
\begin{equation}
S(\omega,0,0)\equiv 1, \quad \frac{\partial S}{\partial
\rho_{\omega_{i}}}(\omega,0,0)\equiv 0,
 \label{1.17}%
\end{equation}
and
\begin{equation}
S_{i}(\omega,0,0)\equiv 0, \quad i=1,2,\dots,N.
 \label{1.18}%
\end{equation}

We now briefly explain the derivation of relations \eqref{1.14} and
properties \eqref{1.17} and \eqref{1.18}. In the variables
$(y,\tau)$ in a neighborhood $\Gamma_{T},$ let
\begin{equation}
\Phi_{\rho}(y,\tau)\equiv \lambda(y)-\rho(\omega(y),\tau),
 \label{1.19}%
\end{equation}
where $\lambda(y),$ $\omega(y)$ are the $(\omega,\lambda)$
coordinates of a point $y$ in a neighborhood of the surface
$\Gamma.$ We note that the vector $\nabla_{(y,\tau)}\Phi_{\rho}$ is
directed toward the domain $\Omega_{\rho,T}^{+}.$ Thus, the normal
$\overrightarrow{N}$ in condition \eqref{1.3} is
\begin{equation}
\overrightarrow{N}=\nabla_{(y,\tau)}\Phi_{\rho}/|\nabla_{(y,\tau)}\Phi_{\rho}|.
 \label{1.20}%
\end{equation}
Thus, condition \eqref{1.3} can be rewritten in the form
\begin{equation}
a^{\pm}(\nabla_{y}\Phi_{\rho},\nabla_{y}u^{\pm})=m\frac{\partial
\Phi_{\rho}}{\partial \tau}=-m\rho_{\tau}(\omega(y),\tau).
 \label{1.21}%
\end{equation}
In relation \eqref{1.21}, we pass to the variables $(x,t)$ in
correspondence with the change $(x,t)=e_{\rho}(x,t),$ by taking into
account that $\Phi_{\rho}(x,t)\equiv \lambda(x)$ in the variables
$(x,t)$ in a neighborhood of $\Gamma,$ since $\omega(y)=\omega (x)$
and $\lambda(y)=\lambda(x)+\rho(\omega(x),t).$ Thus, relation
\eqref{1.21} takes the form
\begin{equation}
a^{\pm}(J_{\rho}\nabla_{x}\lambda(x),J_{\rho}\nabla_{x}u^{\pm})=-m\rho_{t}(\omega(x),t),
 \label{1.22}%
\end{equation}
or
\begin{equation}
a^{\pm}(J_{\rho}^{*}J_{\rho}\nabla_{x}\lambda(x),\nabla_{x}u^{\pm})+m\rho_{t}(\omega(x),t)=0.
 \label{1.23}%
\end{equation}
Let else $\nabla_{x}=R\nabla_{(\omega,\lambda)},$ where $R$ is a
Jacobi orthogonal matrix of the transition $x\rightarrow
(\omega,\lambda)$ between the coordinates in a neighborhood of
$\Gamma.$ Since
$\nabla_{x}\lambda(x)=\overrightarrow{n}(\omega(x)),$ relation
\eqref{1.23} can be written in the form
\begin{equation}
a^{\pm}(R^{*}J_{\rho}^{*}J_{\rho}R\overrightarrow{n},
\nabla_{(\omega,\lambda)}u^{\pm})+\rho_{t}(\omega,t)=0.
 \label{1.24}%
\end{equation}
In other words, due to the smoothness of the surface $\Gamma$ and,
respectively, the smoothness of the change $x\rightarrow
(\omega,\lambda),$ relation \eqref{1.24} is a relation of the form
\eqref{1.14} with some smooth functions $S$ and $S_{i},$ because
\[
\frac{\partial u^{\pm}}{\partial \lambda}= \frac{\partial
u^{\pm}}{\partial \overrightarrow{n}}.
\]
In this case, at $\rho=0$ and $\rho_{\omega_{i}}=0,$ the matrix
$J_{\rho}\equiv I,$ i.e., relation \eqref{1.24} takes the form
(since $R^{*}R=R^{-1}R=I$ )
\begin{equation}
a^{\pm}\frac{\partial u^{\pm}}{\partial
\overrightarrow{n}}+\rho_{t}(\omega,t)=0,
 \label{1.25}%
\end{equation}
which yields properties \eqref{1.17} and \eqref{1.18}.

Below, we formulate the main result.

\begin{theorem} \label{T1}
Let conditions \eqref{1.7a}--\eqref{1.8aaa} and \eqref{1.9} be
satisfied. Then there exists $T>0$ such that problem
\eqref{1.12}--\eqref{1.16} $($and, hence, problem
\eqref{1.1}--\eqref{1.5}$)$ has the unique smooth solution at $t\in
[0,T],$ and
\begin{equation}
|u^{\pm}|_{E^{2+\alpha}(\overline{\Omega_{T}^{\pm}})}+|\rho|_{P^{2+\alpha}(\Gamma_{T})}\leq
C(u_{0}^{\pm},g^{\pm},\Gamma,\Gamma^{\pm}).
 \label{1.27}%
\end{equation}
\end{theorem}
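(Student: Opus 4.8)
The plan is to prove Theorem~\ref{T1} by combining the parabolic regularization of the dynamic free-boundary condition \eqref{1.14} with a linearization-and-fixed-point scheme in the fixed domains $\Omega_T^{\pm}$, and then to remove the regularization by a compactness argument, keeping every estimate uniform in the regularization parameter. Concretely, for $\varepsilon>0$ I would introduce the regularized problem obtained from \eqref{1.12}--\eqref{1.16} by replacing \eqref{1.14} with the parabolic equation on $\Gamma_T$
\[
\rho_t-\varepsilon\,\Delta_\omega\rho+a^{\pm}S(\omega,\rho,\nabla_\omega\rho)\frac{\partial u^{\pm}}{\partial\overrightarrow{n}}+a^{\pm}\sum_{i=1}^{N}S_i(\omega,\rho,\nabla_\omega\rho)\frac{\partial u^{\pm}}{\partial\omega_i}=0,
\]
where $\Delta_\omega$ is the Laplace--Beltrami operator on $\Gamma$ (one may equally use $-\varepsilon\Delta_\omega\rho_t$). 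Since $\rho(\cdot,0)\equiv0$, the zeroth-order compatibility in \eqref{1.8} is untouched, and using \eqref{1.17}--\eqref{1.18} together with \eqref{1.8}--\eqref{1.9} one checks the first-order compatibility at $t=0$ required by this parabolic equation. This is the device of \cite{M}, realized for $N=2$ in \cite{DN4,DN5}; for $N\ge3$ the geometric bookkeeping through the change of variables $e_{\rho}$ follows \cite{D9}.

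Next I would solve the linear problem associated with the $\varepsilon$-problem: for given $\bar\rho$ in a ball of $P^{2+\alpha}(\Gamma_T)$ with $\bar\rho(\cdot,0)\equiv0$ and given $\bar u^{\pm}$ close to $u_0^{\pm}$ in $E^{2+\alpha}(\overline{\Omega_T^{\pm}})$, freeze the arguments of $\nabla_\rho$, $S$, $S_i$ at $\bar\rho$ and replace $f^{\pm}(u^{\pm})$ by $(f^{\pm})'(\bar u^{\pm})\,u^{\pm}$ plus a known right-hand side. This decouples into: (a)~for each fixed $t$, two elliptic Dirichlet problems for $u^{\pm}$ in the fixed domains $\Omega^{\pm}$ with data $g^{\pm}$ on $\Gamma^{\pm}$ and a common unknown trace on $\Gamma$; the zeroth-order coefficient $(f^{\pm})'(\bar u^{\pm})\ge\nu>0$ from \eqref{1.7} makes each problem uniquely solvable with no compatibility constraint and with a maximum principle, giving Schauder bounds $u^{\pm}\in E^{2+\alpha}$ and, in particular, $\partial u^{\pm}/\partial\overrightarrow{n}|_{\Gamma}\in E^{1+\alpha}$ depending linearly and continuously on the trace; and (b)~substituting these into the two copies of the regularized condition, one copy becomes, via the strict inequality in \eqref{1.9}, an elliptic relation on $\Gamma$ that determines the common trace as a function of $\rho$, while the other becomes a parabolic equation on $\Gamma_T$ for $\rho$ with principal part $\rho_t-\varepsilon\Delta_\omega\rho$ plus a first-order Dirichlet--Neumann operator; the sign of that operator, sharpened by \eqref{1.9}, supplies the dissipativity needed to solve the parabolic problem and to obtain $\rho\in P^{2+\alpha}(\Gamma_T)$ with an estimate that does not deteriorate as $\varepsilon\to0$ (the $\varepsilon\Delta_\omega$-term only improving the constants). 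This produces a solution operator $\mathcal{L}_\varepsilon:(\bar u^{\pm},\bar\rho)\mapsto(u^{\pm},\rho)$ of the linearized $\varepsilon$-problem together with a priori bounds uniform in $\varepsilon$.

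Then I would run a contraction argument on $\mathcal{L}_\varepsilon$ over a short interval $[0,T]$. On a ball $\{\,|u^{\pm}-u_0^{\pm}|_{E^{2+\alpha}}+|\rho|_{P^{2+\alpha}}\le M,\ \rho(\cdot,0)\equiv0\,\}$ the discrepancies between the frozen and the true nonlinear terms are bounded by $C(M)\,T^{\gamma}$: indeed $f^{\pm}\in C^3_{loc}$ by \eqref{1.7} are locally Lipschitz, the functions $S,S_i$ are smooth with the normalizations \eqref{1.17}--\eqref{1.18} forcing their ``$\rho$-part'' to vanish at $\rho=0$, and a positive power of $T$ is gained from the H\"{o}lder seminorms in \eqref{A.1} and interpolation. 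Hence for $M$ fixed by the data and $T=T(M)$ small, $\mathcal{L}_\varepsilon$ maps the ball into itself and is a contraction in a slightly weaker norm, giving a unique solution $(u^{\pm}_\varepsilon,\rho_\varepsilon)$ of the $\varepsilon$-problem on $[0,T]$ with $T$ and the bound \eqref{1.27} independent of $\varepsilon$. Finally, by this uniform bound and compactness of the embeddings $E^{2+\alpha}\hookrightarrow E^{2+\alpha'}$, $P^{2+\alpha}\hookrightarrow P^{2+\alpha'}$ for $\alpha'<\alpha$, I extract $\varepsilon_k\to0$ with $u^{\pm}_{\varepsilon_k}\to u^{\pm}$, $\rho_{\varepsilon_k}\to\rho$; passing to the limit in \eqref{1.12}--\eqref{1.13}, \eqref{1.15}--\eqref{1.16} and in the regularized condition, where $\varepsilon_k\Delta_\omega\rho_{\varepsilon_k}\to0$ by the uniform bound, recovers \eqref{1.14}; lower semicontinuity of the norms yields \eqref{1.27}, and uniqueness for the limit problem follows by applying the same contraction/energy estimate (again using \eqref{1.9}) to the difference of two solutions.

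The step I expect to be the main obstacle is the uniform-in-$\varepsilon$ Schauder theory for the coupled linear $\varepsilon$-problem, item~(b): one must show that inserting the Dirichlet--Neumann maps produced by (a) into the regularized dynamic condition yields a parabolic problem on $\Gamma_T$ whose solvability constants remain bounded as $\varepsilon\to0$. This rests on exploiting the correct sign of these nonlocal operators together with the strict nondegeneracy \eqref{1.9} and the coercivity $(f^{\pm})'\ge\nu>0$, and on carrying the geometry for $N\ge3$ through the change of variables $e_\rho$ (the mechanism of \cite{D9}); the remaining ingredients --- interior and boundary Schauder estimates, interpolation in the spaces \eqref{A.1}--\eqref{A.4}, verification of the compatibility conditions for the regularized and linearized problems, and the fixed-point iteration --- are routine.
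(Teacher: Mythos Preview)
Your overall strategy---parabolic regularization of the dynamic condition, solution of a linearized problem with estimates uniform in $\varepsilon$, contraction mapping, and passage to the limit $\varepsilon\to0$---matches the paper, but the execution differs in two organizational respects and one technical one.

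First, the paper carries out the limit $\varepsilon\to0$ at the \emph{linear} level: Theorem~\ref{T3} is proved for $\varepsilon>0$ and then, by compactness, for $\varepsilon=0$; the contraction argument for the nonlinear problem (Section~\ref{s5}) is then run \emph{once}, directly at $\varepsilon=0$. You instead regularize the full nonlinear problem, run the contraction for each $\varepsilon$, and pass to the limit at the nonlinear level. Both orders work; the paper's choice makes uniqueness for the limit problem immediate from the contraction, while yours requires the extra argument you sketch at the end. Second, the paper's linearization is Newton-type around a constructed initial approximation $(w^{\pm}\circ e_\sigma,\sigma)$ satisfying \eqref{2.1}--\eqref{2.4}, so that the new unknowns $(v^{\pm},\delta)$ lie in the ``dot'' spaces and the right-hand sides $F_1,\dots,F_6$ vanish at $t=0$ (Lemma~\ref{L1}); you freeze coefficients at $(\bar u^{\pm},\bar\rho)$, which is a slightly coarser linearization but serves the same purpose.

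The substantive technical difference is in the step you correctly flag as the obstacle. The paper obtains the uniform-in-$\varepsilon$ Schauder estimate not by reducing to Dirichlet--Neumann operators and invoking a sign/dissipativity argument, but by an explicit analysis of the constant-coefficient half-space model problem (Section~\ref{s3}): after Fourier--Laplace transform one gets \eqref{1.69}, and the inverse kernel $K_\varepsilon$ is shown (Lemma~\ref{LA5.1}) to obey the pointwise bounds \eqref{A5.4} \emph{independently of} $\varepsilon$, essentially because $K_\varepsilon=\Gamma_\varepsilon\ast G$ with $G$ the Poisson kernel \eqref{1.78}. These kernel bounds yield the potential estimates \eqref{A5.11}--\eqref{A5.19}, and the global estimate \eqref{1.99} then follows by the standard freezing-and-localization Schauder technique plus an $L_2$ energy argument (Lemma~\ref{L4.1}) to absorb the lower-order H\"older-in-$t$ term in \eqref{A6.1}. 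Your proposed route through nonlocal DtN operators and ``dissipativity from \eqref{1.9}'' is morally correct---the symbol in \eqref{1.69} shows exactly that the first-order part $B|\xi|$ has the good sign because $A>0$, which in the original problem traces back to \eqref{1.9}---but the paper replaces that heuristic by the concrete kernel computation, which is what actually delivers the $E^{2+\alpha}/P^{2+\alpha}$ bounds uniformly in $\varepsilon$.
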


The subsequent sections are devoted to the proof of Theorem
\ref{T1}.

\section{Linearization of problem \eqref{1.12}--\eqref{1.16}} \label{s2}

To prove Theorem \ref{T1}, we use the method developed in
\cite{D6,D9} that presents, in essence, a version of the Newton
method for the solution of nonlinear equations.

First, we construct the ``initial approximation'' to the solution of
the nonlinear problem \eqref{1.12}--\eqref{1.16}. By
$\sigma(\omega,t),$ we denote a function of the class
$H^{4+\alpha,\frac{4+\alpha}{2}}(\Gamma_{T})$ such that
\begin{equation}
\sigma(\omega,0)=\rho(\omega,0)=0, \quad \frac{\partial
\sigma}{\partial t}(\omega,0)=\rho_{t}(\omega,0)\equiv
\rho^{(1)}(\omega)=a^{\pm}\frac{\partial u_{0}^{\pm}}{\partial
\overrightarrow{n}}.
 \label{2.1}%
\end{equation}

We now continue the functions $u_{0}^{\pm}$ through the surface
$\Gamma$ onto the whole domain $\overline{\Omega}$ and construct the
functions $w^{\pm}(x,t)\in
H^{4+\alpha,\frac{4+\alpha}{2}}(\overline{\Omega_{T}})$ such that
\begin{equation}
w^{\pm}(x,0)=u_{0}^{\pm}(x), \quad x\in \overline{\Omega},
 \label{2.2}%
\end{equation}
\begin{equation}
\frac{\partial w^{\pm}}{\partial t}(x,0)|_{\Gamma}=-\frac{\partial
u_{0}^{\pm}}{\partial
\overrightarrow{n}}(x)|_{\Gamma}\rho_{t}(\omega(x),0)=-\frac{\partial
u_{0}^{\pm}}{\partial
\overrightarrow{n}}(x)|_{\Gamma}\rho^{(1)}(\omega(x)).
 \label{2.3}%
\end{equation}
The procedure of construction of such functions is described in
\cite{D10}. We note that condition \eqref{2.3} implies that the
complicated functions $w^{\pm}\circ e_{\sigma}$ satisfy the relation
\begin{equation}
\frac{\partial (w^{\pm}\circ e_{\sigma})}{\partial
t}(x,0)|_{\Gamma}=\frac{\partial w^{\pm}}{\partial
t}(x,0)+\frac{\partial w^{\pm}}{\partial
\overrightarrow{n}}(x,0)|_{\Gamma}\sigma_{t}(\omega,0)=0.
 \label{2.4}%
\end{equation}
It will be used in what follows and yields, for example, $\partial
F_{3}/\partial t (x,0)=0$ in \eqref{1.32} below. (We note that we
could require that a single condition,
\[
\frac{\partial w^{+}}{\partial t}(x,0)|_{\Gamma}-\frac{\partial
w^{-}}{\partial t}(x,0)|_{\Gamma}=-\Big(\frac{\partial
u_{0}^{+}}{\partial \overrightarrow{n}}(x)|_{\Gamma}-\frac{\partial
u_{0}^{-}}{\partial \overrightarrow{n}}(x)|_{\Gamma}\Big)\rho^{(1)},
\]
be satisfied instead of conditions \eqref{2.3}, which would also
give the necessary result.)

We note that the principal linear part of the mapping $\delta\in
P^{2+\alpha}(\Gamma_{T})\rightarrow w^{\pm}\circ
e_{\sigma+\delta}\in P^{2+\alpha}(\overline{\Omega_{T}})$ is
\[
\lim_{\varepsilon \rightarrow 0} \frac{w^{\pm}\circ
e_{\sigma+\varepsilon\delta}-w^{\pm}\circ
e_{\sigma}}{\varepsilon}=\Big(\frac{\partial w^{\pm}}{\partial
\lambda}\circ
e_{\sigma}\Big)\chi(\lambda(x))\delta(\omega(x),t)\equiv
b^{\pm}(x,t)\delta.
\]
We denote
\begin{equation}
\delta(\omega,t)=\rho(\omega,t)-\sigma(\omega,t), \quad
v^{\pm}(x,t)=u^{\pm}(x,t)-w^{\pm}\circ e_{\sigma}-b^{\pm}\delta.
 \label{1.29}%
\end{equation}

Since any function $f(x,t)$ satisfies the relation
\begin{equation}
(L_{0}f)\circ e_{\rho}=L_{\rho}(f\circ e_{\rho}),
 \label{1.30}%
\end{equation}
we present relations \eqref{1.12}--\eqref{1.16} in the form
\begin{multline}
\triangle v^{ \pm
}(x,t)-(f^{\pm})'(w^{\pm}\circ e_{\sigma})v^{\pm}\\
=\{  (  L_{0}^{%
\pm
}-L_{\sigma}^{%
\pm
})  v^{%
\pm }(x,t) + [f^{\pm}(w^{\pm}\circ
e_{\sigma}+v^{\pm}+b^{\pm}\delta)\\
-(f^{\pm})'(w^{\pm}\circ
e_{\sigma})(v+b^{\pm}\delta)-f^{\pm}(w^{\pm}\circ e_{\sigma})]\\
+ (f^{\pm})'(w^{\pm}\circ e_{\sigma})b^{\pm}\delta +  [
f^{\pm}(w^{\pm}\circ
e_{\sigma})    - (  L_{0}^{%
\pm
}w^{%
\pm })  \circ e_{\sigma+\delta}  ]      \}
\\
+\Big\{  L_{\sigma}^{%
\pm
}\Big(  w^{%
\pm
}\circ e_{\sigma+\delta}-w^{%
\pm
}\circ e_{\sigma}-\Big(  \frac{\partial w^{%
\pm }}{\partial\lambda}\circ e_{\sigma}\Big)
\chi(\lambda)\delta\Big) \\
-(  L_{\sigma+\delta}^{%
\pm
}-L_{\sigma}^{%
\pm
})  \Big(  v^{%
\pm
}+\Big(  \frac{\partial w^{%
\pm }}{\partial\lambda}\circ e_{\sigma}\Big)  \chi\delta\Big)\\ +(
L_{\sigma+\delta}^{%
\pm
}-L_{\sigma}^{%
\pm
})  (w^{%
\pm
}\circ e_{\sigma+\delta}-w^{%
\pm }\circ e_{\sigma})\Big\} \\
\equiv F_{1}^{%
\pm
}(x,t;v^{%
\pm
},\delta)+F_{2}^{%
\pm
}(x,t;v^{%
\pm
},\delta),\quad(x,t)\in\Omega_{T}^{%
\pm
}, \label{1.31}%
\end{multline}%
\begin{equation}
v^{+}-v^{-}+\Big(  \frac{\partial w^{+}}{\partial
\overrightarrow{n}}- \frac{\partial w^{-}}{\partial
\overrightarrow{n}}\Big)\circ e_{\sigma}  \delta=w^{+}\circ
e_{\sigma}-w^{-}\circ e_{\sigma}
\equiv F_{3}(x,t),\quad(x,t)\in\Gamma_{T}, \label{1.32}%
\end{equation}%
\begin{multline}
\delta_{t}+a^{\pm}\frac{\partial v^{\pm}}{\partial
\overrightarrow{n}}+\sum_{i=1}^{N-1}\Big(a^{\pm}\frac{\partial
S_{i}}{\partial
\rho_{\omega_{i}}}(\omega,\sigma,\nabla_{\omega}\sigma)\frac{\partial
w^{\pm}\circ e_{\sigma}}{\partial \omega_{i}}
\Big)\delta_{\omega_{i}}
 = \bigg\{ a^{\pm}[1-
S(\omega,\sigma,\nabla_{\omega}\sigma)]\frac{\partial
v^{\pm}}{\partial \overrightarrow{n}}\\ -\bigg[
\sigma_{t}+a^{\pm}S(\omega,\sigma,\nabla_{\omega}\sigma)\frac{\partial
w^{\pm}\circ e_{\sigma}}{\partial
\overrightarrow{n}}+\sum_{i=1}^{N-1}a^{\pm}S_{i}
(\omega,\sigma,\nabla_{\omega}\sigma)\frac{\partial
w^{\pm}\circ e_{\sigma}}{\partial \omega_{i}} \bigg]\\
-S(\omega,\sigma+\delta,\nabla_{\omega}\sigma+
\nabla_{\omega}\delta)\frac{\partial^{2}w^{\pm}}{\partial
\overrightarrow{n}^{2}}\delta-
\sum_{i=1}^{N-1}a^{\pm}S_{i}(\omega,\sigma,\nabla_{\omega}\sigma)\frac{\partial
v^{\pm}}{\partial \omega_{i}}
-a^{\pm}\sum_{i=1}^{N-1}a^{\pm}S_{i}(\omega,\sigma,\nabla_{\omega}\sigma)\frac{\partial^{2}
w^{\pm}}{\partial \overrightarrow{n} \partial \omega_{i}}\delta
\bigg\}\\ +\bigg\{
-a^{\pm}[S(\omega,\sigma+\delta,\nabla_{\omega}\sigma+\nabla_{\omega}\delta)-
S(\omega,\sigma,\nabla_{\omega}\sigma)] \Big[\frac{\partial
v^{\pm}}{\partial \overrightarrow{n}}+\frac{\partial^{2}
w^{\pm}\circ e_{\sigma}}{\partial \overrightarrow{n}^{2}}\delta
\Big]\\ -a^{\pm}\sum_{i=1}^{N-1}\bigg[
S_{i}(\omega,\sigma+\delta,\nabla_{\omega}\sigma+\nabla_{\omega}\delta)\\-
\sum_{j=1}^{N-1}\frac{\partial S_{i}}{\partial
\rho_{\omega_{j}}}(\omega,\sigma,\nabla_{\omega}\sigma)\delta_{\omega_{j}}
-S_{i}(\omega,\sigma,\nabla_{\omega}\sigma)\bigg] \Big(
\frac{\partial v^{\pm}}{\partial \omega_{i}}+\frac{\partial
w^{\pm}\circ e_{\sigma}}{\partial \omega_{i}}+\frac{\partial^{2}
w^{\pm}\circ e_{\sigma}}{\partial \overrightarrow{n} \partial
\omega_{i}}\delta \Big)\\
 -\sum_{i,j=1}^{N-1}\frac{\partial
S_{i}}{\partial
\rho_{\omega_{j}}}(\omega,\sigma,\nabla_{\omega}\sigma)\delta_{\omega_{j}}\Big(
\frac{\partial v^{\pm}}{\partial \omega_{i}}+\frac{\partial^{2}
w^{\pm}\circ e_{\sigma}}{\partial \overrightarrow{n} \partial
\omega_{i}}\delta \Big) \bigg\}\\ \equiv
F_{4}^{\pm}(x,t,v^{\pm},\delta)+F_{5}^{\pm}(x,t,v^{\pm},\delta),
\label{1.33}
\end{multline}


\begin{equation}
v^{%
\pm
}(x,t)=g^{%
\pm
}(x,t)-w^{%
\pm
}\circ e_{\sigma}\equiv F_{6}^{%
\pm
}(x,t),\quad(x,t)\in\Gamma_{T}^{%
\pm
}, \label{1.34}%
\end{equation}%

\begin{equation}
v^{%
\pm }(x,0)=0,\quad x\in\overline{\Omega^{ \pm}
},\quad\delta(\omega,0)=0, \label{1.35}%
\end{equation}%
where we took into account that $\chi(\lambda)\equiv 1$ in a
neighborhood of $\Gamma,$ and the mappings $e_{\rho}$ and
$e_{\sigma}$ are the identity ones outside of some neighborhood of
$\Gamma.$  Moreover, the procedure of construction of the functions
$w^{\pm}$ and $\sigma$ implies that we search for actually the
functions $v^{\pm}$ and $\delta$ such that, additionally to
\eqref{1.35},
\[
v^{\pm}\in \dot{E}^{2+\alpha}(\overline{\Omega_{T}^{\pm}}), \quad
\delta \in \dot{P}^{2+\alpha}(\Gamma_{T}),
\]
where the dot above the symbol of a space means a subspace
consisting of functions vanishing at $t=0$ together with all
derivatives with respect to $t$ that are admitted by the class.
Of basic importance is the circumstance that such classes with a dot
satisfy relations analogous to those for the classes
$\dot{H}^{l,l/2}.$ Namely, if $u,v\in \dot{H}^{l,l/2},$ then
\begin{gather*}
\left|  u\right|
_{H^{l^{\prime},l^{\prime}/2}(\overline{\Omega}_{T})}\leq
CT^{\frac{l-l^{\prime}}{2}}\left|  u\right|  _{H^{l,l/2}(\overline{\Omega}%
_{T})},\\\left|  u\right|
_{S^{l^{\prime}}(\overline{\Omega}_{T})}\leq
CT^{\frac{l-l^{\prime}}{2}}\left|  u\right|  _{S^{l}(\overline{\Omega}_{T}%
)},\\ l^{\prime}<l,
\end{gather*}
\begin{gather*}
\left|  u\right| _{E^{2+\alpha^{\prime}}(\overline{\Omega}_{T})}\leq
CT^{\frac{\alpha-\alpha^{\prime}}{2}}\left|  u\right| _{E^{2+\alpha
}(\overline{\Omega}_{T})}, \\ \left|  u\right|
_{P^{2+\alpha^{\prime}}(\overline{\Omega}_{T})}\leq
CT^{\frac{\alpha-\alpha^{\prime}}{2}}\left|  u\right| _{P^{2+\alpha
}(\overline{\Omega}_{T})}
 \\\alpha^{\prime}<\alpha,
\end{gather*}
\begin{equation}
\left|  uv\right|  _{H^{l,l/2}(\overline{\Omega}_{T})}\leq CT^{\frac{l-[l]}%
{2}}\left|  u\right|  _{H^{l,l/2}(\overline{\Omega}_{T})}\left|
v\right|
_{H^{l,l/2}(\overline{\Omega}_{T})}. \label{2.23}%
\end{equation}
It is easy to see that the right-hand sides $F_{1}-F_{6}$ of
relations \eqref{1.31}--\eqref{1.35} also vanish at $t=0$ and, thus,
belong to the classes with a dot.


The sense of relations \eqref{1.31}--\eqref{1.35} consists in the
separation of the principal part in the nonlinear relations
\eqref{1.12}--\eqref{1.16} that is linear in $v^{\pm}$ and $\delta.$
In this case, all ``free terms'' (possessing a high smoothness) and
``quadratic'' terms are transferred to the right-hand side. Then,
using directly the definition of the functions $F_{i}$ on right-hand
sides of \eqref{1.31}--\eqref{1.35} and considering separately each
term, it is easy to verify the validity of the following
proposition. We denote
\[
\mathcal{H}=\dot{E}^{2+\alpha}(\overline{\Omega_{T}^{+}})\times
\dot{E}^{2+\alpha}(\overline{\Omega_{T}^{-}})\times
\dot{P}^{2+\alpha}(\Gamma_{T}), \quad \psi=(v^{+},v^{-},\delta)\in
\mathcal{H},
\]
\begin{equation}
\|\psi\|=|v^{+}|_{E^{2+\alpha}(\overline{\Omega_{T}^{+}})}+
|v^{-}|_{E^{2+\alpha}(\overline{\Omega_{T}^{-}})}+|\delta|_{P^{2+\alpha}(\Gamma_{T})}
 \label{1.36}%
\end{equation}%
and will consider the functions $F_{i}$ as functions of $\psi.$

\begin{lemma} \label{L1}
Let $\psi,$ $\psi_{1},$ $\psi_{2}$ $\in \mathcal{H}.$ Then
\begin{equation}
\left|  F_{1}^{%
\pm
}(x,t;\psi)\right|  _{E^{\alpha}(\overline{\Omega}_{T}^{%
\pm })}\leq C(1+\left\|  \psi\right\|  )T^{\alpha/2}, \label{1.37}
\end{equation}%
\begin{equation}
\left|  F_{1}^{%
\pm
}(x,t;\psi_{2})-F_{1}^{%
\pm
}(x,t;\psi_{1})\right|  _{E^{\alpha}(\overline{\Omega}_{T}^{%
\pm })}C(\left\|  \psi_{i}\right\|  )T^{\alpha/2}\left\|
\psi_{2}-\psi
_{1}\right\|  , \label{1.38}%
\end{equation}%
\begin{equation}
\left|  F_{2}^{%
\pm
}(x,t;\psi)\right|  _{E^{\alpha}(\overline{\Omega}_{T}^{%
\pm })}\leq C(\left\|  \psi\right\|  )\left\|  \psi\right\|  ^{2},
\label{1.39}
\end{equation}%
\begin{equation}
\left|  F_{2}^{%
\pm
}(x,t;\psi_{2})-F_{2}^{%
\pm
}(x,t;\psi_{1})\right|  _{E^{\alpha}(\overline{\Omega}_{T}^{%
\pm })}\leq C(\left\|  \psi_{i}\right\|  )(\left\| \psi_{1}\right\|
+\left\|
\psi_{2}\right\|  )\left\|  \psi_{2}-\psi_{1}\right\|  , \label{1.40}%
\end{equation}%
\begin{equation}
\left|  F_{3}^{%
\pm
}(x,t)\right|  _{E^{2+\alpha}(\Gamma_{T}^{%
\pm
})} \leq CT^{\alpha/2}, \label{1.41}%
\end{equation}%
\begin{equation}
\left|  F_{4}^{%
\pm
}(x,t)\right|  _{E^{1+\alpha}(\Gamma_{T})}\leq CT^{\alpha/2}, \label{1.42}%
\end{equation}%
\begin{equation}
\left|  F_{4}^{%
\pm
}(x,t;\psi_{2})-F_{4}^{%
\pm }(x,t;\psi_{1})\right|  _{E^{1+\alpha}(\Gamma_{T})}\leq
CT^{\alpha/2}\left\|
\psi_{2}-\psi_{1}\right\|, \label{1.43}%
\end{equation}
\begin{equation}
\left|  F_{5}^{%
\pm }(x,t;\psi)\right|  _{E^{1+\alpha}(\Gamma_{T})}\leq C(\left\|
\psi\right\| )\left\|  \psi\right\|  ^{2}, \label{1.44}
\end{equation}%
\begin{equation}
\left|  F_{5}^{%
\pm
}(x,t;\psi_{2})-F_{5}^{%
\pm }(x,t;\psi_{1})\right|  _{E^{1+\alpha}(\Gamma_{T})}\leq
C(\left\| \psi_{i}\right\|  )(\left\|  \psi_{1}\right\| +\left\|
\psi_{2}\right\|  )\left\|  \psi_{2}-\psi_{1}\right\|  , \label{1.45}%
\end{equation}%
\begin{equation}
\left|  F_{6}^{%
\pm
}(x,t)\right|  _{E^{2+\alpha}(\Gamma_{T})}\leq CT^{\alpha/2}, \label{1.46}%
\end{equation}%
where the constants $C(\|\psi_{i}\|)$ remain bounded at bounded
$\|\psi_{i}\|.$
\end{lemma}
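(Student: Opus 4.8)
The strategy is to verify each of the estimates \eqref{1.37}--\eqref{1.46} by going through the explicit expressions for the $F_i$ term by term, exploiting three structural facts together with the mapping properties already recorded in \eqref{2.23} and the displayed interpolation inequalities. The first fact is that all quantities built from the fixed data --- $w^\pm$, $\sigma$, the coefficients $S$, $S_i$ and their derivatives, the cut-off $\chi$, the maps $e_\sigma$, $e_{\sigma+\delta}$, and the operators $L_\sigma^\pm$ --- lie in $H^{4+\alpha,(4+\alpha)/2}$ (or are smooth functions of such quantities), hence in particular their $E^{2+\alpha}$ (respectively $E^{1+\alpha}$ on $\Gamma_T$) norms are controlled by the data constant $C$. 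The second fact is that by construction $w^\pm$, $\sigma$, $v^\pm$, $\delta$ and all the $F_i$ belong to the dotted classes, so every one of them gains a factor $T^{\alpha/2}$ (or a higher power of $T$) upon lowering the smoothness index by $\alpha$, via the inequalities $|u|_{E^{2+\alpha'}}\le CT^{(\alpha-\alpha')/2}|u|_{E^{2+\alpha}}$ and the analogous $H^{l,l/2}$ and $P^{k+\alpha}$ versions. The third fact is the multiplicative inequality \eqref{2.23} (and its $E$- and $P$-analogues), which lets a product of two dotted factors absorb the norms of both.

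Concretely I would proceed as follows. \emph{Linear-in-$\psi$, ``data'' terms} ($F_1^\pm$ minus its genuinely nonlinear part, and $F_3,F_4,F_6$): these are either of the form (smooth data function)$\times$(a component of $\psi$ or its derivative) with the data function carrying a $T^{\alpha/2}$ factor because it vanishes at $t=0$, or pure data terms such as $F_3=w^+\circ e_\sigma-w^-\circ e_\sigma$, $F_6^\pm=g^\pm-w^\pm\circ e_\sigma$, $F_4^\pm$'s leading brace, each of which vanishes at $t=0$ by the consistency/construction conditions \eqref{1.8}, \eqref{2.2}, \eqref{2.3}, \eqref{2.4} and hence is $O(T^{\alpha/2})$ in the lower class. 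The operator-difference pieces $(L_0^\pm-L_\sigma^\pm)v^\pm$ are handled by writing $L_0-L_\sigma$ as a differential operator whose coefficients are built from $J_\sigma-I$, which vanishes at $t=0$ since $\sigma(\cdot,0)\equiv0$; thus the coefficients are $O(T^{\alpha/2})$ in $E^\alpha$ and the $E^{2+\alpha}$ norm of $v^\pm$ supplies $\|\psi\|$, giving \eqref{1.37}. \emph{Quadratic terms} ($F_2^\pm$, $F_5^\pm$): in $F_2$ the Taylor remainder $f^\pm(w\circ e_\sigma+v+b\delta)-(f^\pm)'(w\circ e_\sigma)(v+b\delta)-f^\pm(w\circ e_\sigma)$ is $O((v+b\delta)^2)$ uniformly by $f^\pm\in C^3_{loc}$, and the operator differences $L_{\sigma+\delta}-L_\sigma$ applied to $v$ or $w\circ e_{\sigma+\delta}-w\circ e_\sigma$ each produce a factor linear in $\delta$ (through $J_{\sigma+\delta}-J_\sigma$) times a factor linear in $\psi$; the $E^{2+\alpha}$-multiplicative inequality then yields the bound $C(\|\psi\|)\|\psi\|^2$ in \eqref{1.39}. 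The same bookkeeping, now using the $E^{1+\alpha}(\Gamma_T)$ product inequality and the smoothness of $S,S_i$, gives \eqref{1.44}: every bracket in $F_5^\pm$ is a first-order Taylor remainder of $S$ or $S_i$ in $(\rho,\nabla_\omega\rho)$ evaluated between $\sigma$ and $\sigma+\delta$, hence $O(\delta^2+|\nabla_\omega\delta|^2)$, multiplied by a factor controlled by $\|\psi\|$.

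\emph{Lipschitz (difference) estimates} \eqref{1.38}, \eqref{1.40}, \eqref{1.43}, \eqref{1.45}: these follow from the same decompositions by the standard device of subtracting the expressions at $\psi_2$ and $\psi_1$, inserting and regrouping, and using that each summand is polynomial (or $C^2$) in the components of $\psi$ with coefficients in the relevant data class; a difference of products $a_2b_2-a_1b_1=a_2(b_2-b_1)+(a_2-a_1)b_1$ together with the multiplicative and interpolation inequalities produces $\|\psi_2-\psi_1\|$ with a coefficient that is either $CT^{\alpha/2}$ (linear terms) or $C(\|\psi_i\|)(\|\psi_1\|+\|\psi_2\|)$ (quadratic terms). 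The main obstacle, and the only place requiring genuine care rather than bookkeeping, is making sure that in the boundary relation \eqref{1.33} the right-hand side lands in $E^{1+\alpha}(\Gamma_T)$ and not merely $E^\alpha$: the terms involving $\partial v^\pm/\partial\overrightarrow n$, $\partial v^\pm/\partial\omega_i$ and $\partial^2 w^\pm/\partial\overrightarrow n^2\cdot\delta$ are only first-order in $v$, so one must use the full strength of $v^\pm\in E^{2+\alpha}(\overline{\Omega_T^\pm})$ (giving traces of first derivatives in $E^{1+\alpha}(\Gamma_T)$, with the appropriate anisotropic Hölder seminorm in $t$) and of $\delta\in P^{2+\alpha}(\Gamma_T)$, and check that the coefficients $1-S$, $S_i$, $\partial S_i/\partial\rho_{\omega_j}$, which vanish at $(\sigma,\nabla_\omega\sigma)=(0,0)$ only in the limiting sense $S(\omega,0,0)=1$, $S_i(\omega,0,0)=0$, indeed vanish at $t=0$ because $\sigma(\cdot,0)=0$ --- this is what produces the $T^{\alpha/2}$ in \eqref{1.42} and \eqref{1.43}. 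Once the trace regularity and these vanishing-at-$t=0$ facts are in hand, each of the ten estimates reduces to an application of \eqref{2.23}, its $E$/$P$ analogues, and the interpolation inequalities displayed before the lemma.
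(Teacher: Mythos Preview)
Your plan is correct and matches the paper's own approach, which is itself only a one-line pointer: the paper states that the lemma is verified ``using directly the definition of the functions $F_i$ \ldots\ and considering separately each term,'' adding only the reminder that relations \eqref{1.17} and \eqref{1.18} must be invoked. Your proposal carries out exactly this term-by-term verification, correctly isolating the three mechanisms (data smoothness, the $T^{\alpha/2}$ gain from the dotted classes via interpolation, and the product inequality \eqref{2.23}) and correctly flagging the role of $S(\omega,0,0)=1$, $S_i(\omega,0,0)=0$ in producing the $T^{\alpha/2}$ factor in the boundary estimates. One small bookkeeping slip: the Taylor remainder of $f^\pm$ sits in $F_1^\pm$, not $F_2^\pm$, in the paper's grouping (the first brace of \eqref{1.31}); this does not affect your argument, since that remainder is both quadratic in $\psi$ and vanishes at $t=0$, so it fits under the bound $C(1+\|\psi\|)T^{\alpha/2}$ for $\|\psi\|$ in a bounded set.
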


We note that, while verifying inequality \eqref{1.37}--\eqref{1.46},
it is necessary to consider also relations \eqref{1.17} and
\eqref{1.18}.



\section{Model problem  corresponding to problem \eqref{1.31}--\eqref{1.35}} \label{s3}

In this section, we consider a simple problem corresponding to the
essence of the linear problem that is set by the left-hand sides of
relations \eqref{1.31}--\eqref{1.35}. Such a problem follows from
problem \eqref{1.31}--\eqref{1.35} by fixing the coefficients on the
left-hand sides of \eqref{1.31}--\eqref{1.35} at some point on the
boundary $\Gamma$ at $t=0$ and by a local straightening of the
surface $\Gamma.$ In addition, the boundary conditions corresponding
to conditions on the free boundary are supplemented by the
regularizing term with a small factor $\varepsilon>0$ so as it was
made in \cite{M,DN4,DN5}.

Let $\mathbb R^{N}_{\pm}=\{x\in\mathbb R^{N}:   \pm x_{N}\geq 0\},$
$\mathbb R^{N}_{\pm,T}=\mathbb R^{N}_{\pm}\times [0,T],$
$x'=(x_{1},x_{2},\dots,x_{N-1}).$ We now consider the problem of the
determination of the unknown functions $u^{\pm}(x,t)$ that are set
on $\mathbb R^{N}_{\pm,\infty}=\mathbb R^{N}_{\pm}\times
[0,\infty),$ respectively, and the unknown function $\rho(x',t)$ set
on $\mathbb R^{N-1}_{\infty}=(\mathbb R^{N}\times [0,\infty))\cap
\{x_{N}=0\}$ by the conditions
\begin{equation}
-\triangle u^{\pm}=f_{1}^{\pm}(x,t)=f_{1}^{\pm}(x,t), \quad
(x,t)\in\mathbb R^{N}_{\pm,\infty}, \label{1.47}
\end{equation}
\begin{equation}
u^{+}(x,t)- u^{-}(x,t)+A\rho(x',t)=f_{2}(x',t), \quad x_{N}=0, \
t\geq 0, \label{1.48}
\end{equation}
\begin{equation}
\rho_{t}(x',t)- \varepsilon\triangle_{x'} \rho
+a^{\pm}\frac{\partial u^{\pm}}{\partial
x_{N}}+\sum_{i=1}^{N-1}h_{i}^{\pm}\rho_{x_{i}}=f_{3}^{\pm}(x',t),
\quad x_{N}=0, \ t\geq 0, \label{1.49}
\end{equation}
\begin{equation}
u^{\pm}(x,0)=0, \quad x\in\mathbb R^{N}_{\pm}, \qquad \rho(x',0)=0.
\label{1.50}
\end{equation}
Here, $f_{1}^{\pm},$ $f_{2},$ and $f_{3}^{\pm}$ are finite
functions,
\begin{equation}
f_{1}^{\pm}\in \dot{E}^{\alpha}(\mathbb R^{N}_{\pm,\infty}), \quad
f_{2}\in \dot{E}^{2+\alpha}(\mathbb R^{N-1}_{\infty}), \quad
f_{3}^{\pm}\in \dot{E}^{1+\alpha}(\mathbb R^{N-1}_{\infty}),
\label{1.51}
\end{equation}
$\varepsilon >0$ is a small fixed positive constant, and $A,$
$a^{\pm},$ and $h_{i}^{\pm}$ are given positive constants.

The following proposition is valid.

\begin{theorem} \label{T2}
Let $\varepsilon \in (0,1],$ and let condition \eqref{1.51} be
satisfied. Then, for any finite solution $(u^{+},u^{-},\rho)$ of
problem \eqref{1.47}--\eqref{1.50}, the estimates
\begin{multline}\label{1.52}
|u^{+}|_{E^{2+\alpha}(\mathbb
R^{N}_{+,T})}+|u^{-}|_{E^{2+\alpha}(\mathbb R^{N}_{-,T})}
+|\rho|_{P^{2+\alpha}(\mathbb R^{N-1}_{T})}+  \varepsilon
|\rho|_{P^{3+\alpha}(\mathbb R^{N-1}_{T})}\\
 \leq C\big(
|f^{+}_{1}|_{E^{\alpha}(\mathbb R^{N}_{+,T})}+
|f^{-}_{1}|_{E^{\alpha}(\mathbb R^{N}_{-,T})}
+|f_{2}|_{E^{2+\alpha}(\mathbb
R^{N-1}_{T})}\\+|f^{-}_{3}|_{E^{1+\alpha}(\mathbb R^{N-1}_{T})}+
|f^{+}_{3}|_{E^{1+\alpha}(\mathbb R^{N-1}_{T})} \big),
\end{multline}
are true, and the constant $C$ in \eqref{1.52} is independent of
$\varepsilon.$
\end{theorem}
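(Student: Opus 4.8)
The plan is to establish the estimate \eqref{1.52} by the classical Fourier--Laplace (or pure Fourier in $x'$ and Laplace in $t$) analysis of the constant-coefficient model problem, following the scheme of \cite{M,DN4,DN5}. First I would apply the Fourier transform in $x'\in\mathbb R^{N-1}$ and the Laplace transform in $t\geq 0$ to the system \eqref{1.47}--\eqref{1.50}. Because the initial data in \eqref{1.50} vanish, the transformed equations reduce, for each dual variable $(\xi,p)$ with $\xi\in\mathbb R^{N-1}$ and $\mathrm{Re}\,p>0$, to a pair of ODEs in $x_N$ on the half-lines $\pm x_N\ge0$, namely $(|\xi|^2-\partial_{x_N}^2)\widehat u^\pm=\widehat f_1^\pm$, whose bounded solutions are sums of the particular solution (built from the explicit Green's function $e^{-|\xi||x_N-s|}/(2|\xi|)$ on the line, restricted and corrected to the half-line) and a decaying exponential $c^\pm(\xi,p)e^{-|\xi|\,|x_N|}$. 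Substituting into the transformed transmission conditions \eqref{1.48}--\eqref{1.49} yields a $3\times3$ linear algebraic system for $(c^+,c^-,\widehat\rho)$ with matrix entries built from $|\xi|$, $p$, $\varepsilon|\xi|^2$, $a^\pm$, $A$, $h_i^\pm$. The core computation is to show that the determinant $\Delta(\xi,p)$ of this system does not vanish for $\mathrm{Re}\,p>0$ and satisfies a lower bound of the Lopatinskii--Shapiro type, uniformly in $\varepsilon\in(0,1]$.

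Second I would carry out the determinant estimate. Eliminating $c^\pm$ through \eqref{1.49} and then using \eqref{1.48}, one is led to an equation of the schematic form
\[
\big(p+\varepsilon|\xi|^2+\text{(transport terms)}\big)\widehat\rho
+\Big(\tfrac{a^+}{a^+/|\xi|}+\tfrac{a^-}{a^-/|\xi|}\Big)\text{-type factor}\cdot A\,\widehat\rho
=\text{(data)},
\]
so that after collecting terms the symbol governing $\widehat\rho$ is essentially $p+\varepsilon|\xi|^2+b(a^+,a^-)|\xi|+\text{(lower order)}$ with $b>0$; the key sign is that the $|\xi|$-coefficient coming from the Neumann traces of the decaying exponentials is strictly positive because $a^\pm>0$. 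Hence $|\Delta(\xi,p)|\gtrsim |p|+|\xi|^2$ on $\mathrm{Re}\,p=0$ (away from the origin this is the genuine parabolicity from $\varepsilon|\xi|^2$ competing with $|p|$, and for small $|\xi|,|p|$ the $|\xi|$ and $p$ terms dominate), and the bound is uniform in $\varepsilon$ because dropping the nonnegative $\varepsilon|\xi|^2$ only weakens the denominator; what remains is the non-regularized symbol $p+b|\xi|+\dots$, which is itself elliptic for $\mathrm{Re}\,p\ge0$. This is the step I expect to be the main obstacle: one must verify the non-vanishing and the precise growth of $\Delta$ carefully for \emph{all} ranges of $(\xi,p)$, tracking that no constant blows up as $\varepsilon\to0$, and simultaneously that the extra factor $\varepsilon|\rho|_{P^{3+\alpha}}$ on the left of \eqref{1.52} is controlled — i.e. that $\varepsilon|\xi|^2\widehat\rho$ and $\varepsilon|\xi|\partial_t\widehat\rho$ etc. are bounded by the data with an $\varepsilon$-independent constant, which follows because $\varepsilon|\xi|^2/\Delta$ stays bounded.

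Third, from the symbolic bounds I would read off the solution formulas
\[
\widehat\rho=\frac{N_\rho(\xi,p)}{\Delta(\xi,p)}\cdot\widehat{\text{data}},\qquad
\widehat u^\pm=\frac{N_\pm(\xi,p)}{\Delta(\xi,p)}\cdot\widehat{\text{data}}+\text{(particular part)},
\]
and convert the Fourier--Laplace multiplier bounds into the Hölder estimates of \eqref{1.52}. For this I would use the characterization of the anisotropic Hölder spaces $E^{k+\alpha}$ and $P^{k+\alpha}$ recalled in \eqref{A.1}--\eqref{A.4} (in particular the norm \eqref{A.3} via second difference quotients in $t$), reducing the Hölder estimates to Mikhlin--Hörmander-type multiplier statements for the symbols $\Delta^{-1}N_\ast$ acting on the half-space and the hyperplane; the homogeneity and the uniform lower bound on $\Delta$ make each multiplier satisfy the required symbol inequalities with $\varepsilon$-independent constants. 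The particular (data) part of $u^\pm$ is handled by the standard elliptic estimate for $-\Delta$ on the half-space in $E^{2+\alpha}$. Finally, the membership in the dotted classes (all data vanish at $t=0$) guarantees that the Laplace-transform framework applies and that the traces at $t=0$ of the constructed solution vanish, matching \eqref{1.50}; combining the pieces gives \eqref{1.52} with $C$ independent of $\varepsilon$.
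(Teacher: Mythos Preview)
Your overall strategy---Fourier--Laplace transform, solve the ODE in $x_N$, identify the scalar symbol governing $\widehat\rho$, and read off estimates---is the same as the paper's starting point, and your identification of the symbol $p+\varepsilon|\xi|^2+B|\xi|-iH\xi$ (with $B>0$ coming from $A,a^\pm$) is exactly what the paper obtains in \eqref{1.69}. The genuine divergence is in how the symbol information is turned into the H\"older estimate \eqref{1.52}.

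You propose to invoke Mikhlin--H\"ormander--type multiplier theorems on the symbols $N_\ast/\Delta$. The paper instead inverts the Laplace transform \emph{explicitly}, obtaining $\widehat K_\varepsilon(\xi,t)=e^{iH\xi t}e^{-\varepsilon|\xi|^2 t}e^{-B|\xi|t}$, and then recognizes $K_\varepsilon$ as the spatial convolution $\Gamma_\varepsilon *_x G$ of the heat kernel $\Gamma_\varepsilon$ with the Poisson kernel $G(x',t)=Ct(|x'|^2+B^2t^2)^{-N/2}$ for the half-space Laplacian (see \eqref{A5.1}--\eqref{1.78}). From this factorization the paper proves, by elementary splitting of the convolution integral (Lemma~\ref{LA5.1}), the pointwise bounds $|D_x^r K_\varepsilon(x',t)|\le C(|x'|^2+t^2)^{-(N-1)/2-|r|}$ with $C$ independent of $\varepsilon$, and then quotes the potential-theoretic H\"older estimates of \cite[Ch.~III]{D16} verbatim. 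The passage to the $E^{k+\alpha}$ norms is done not via multipliers but by the difference-quotient trick $\rho_h=(\rho(\cdot,t)-\rho(\cdot,t-h))/h^\alpha$, which satisfies the same problem with shifted data; the $\varepsilon|\rho|_{P^{3+\alpha}}$ term is obtained a posteriori by rewriting the equation as a heat equation \eqref{A5.13} for $\rho$ with right-hand side already controlled in $E^{1+\alpha}$.

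What each approach buys: the paper's explicit kernel route makes the $\varepsilon$-uniformity completely transparent (the Poisson kernel $G$ carries all the decay, and convolving with the heat kernel only smooths), and it reduces the H\"older theory to classical references without needing a multiplier theorem adapted to the nonstandard spaces $E^{k+\alpha}$, $P^{k+\alpha}$. Your multiplier route is more systematic and would generalize more easily, but you should be aware that (i) the symbol contains $|\xi|$, which is not smooth at $\xi=0$, so a direct Mikhlin--H\"ormander verification needs care; and (ii) the target spaces here are not $L^p$ or standard parabolic H\"older classes but the anisotropic $E^{k+\alpha}$ of \eqref{A.1}--\eqref{A.3}, for which the multiplier-to-H\"older step is not an off-the-shelf citation. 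If you pursue your route, the cleanest path is probably to invert in $t$ first (as the paper does) and then argue at the level of the resulting $x'$-kernel rather than the joint $(\xi,p)$-symbol.
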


The subsequent content of this section is devoted to the proof of
Theorem \ref{T2}. In this case, we consider the continuation of all
functions into the region $t<0$ to be zero since all these functions
belong to classes with a dot (i.e., they vanish at $t=0$ together
with their derivatives).

The proof of Theorem \ref{T2} is preceded by the following lemma.
We now consider the boundary-value problem in the half-space
$\mathbb R_{+,T}^{N}$ with a parameter $\varepsilon >0$:
\begin{equation}
-\Delta u=f(z,t), \quad(z,t)\in\mathbb R_{+,T}^{N}, \label{1.54}
\end{equation}
\begin{equation}
\frac{\partial u}{\partial z_{N}} (z^{\prime},0,t)=F(z^{\prime},t),
\quad (z^{\prime},t)\in\mathbb R_{T}^{N-1},
 \label{1.55}%
\end{equation}
\begin{equation}
 u(z,0)=0,\quad  u(z,t)\in \dot{E}^{2+\alpha}(\overline{\mathbb R_{+,T}^{N}}); \label{1.56}%
\end{equation}
where $f\in \dot{E}^{\alpha}(\overline{\mathbb R_{+,T}^{N}}),$ $F\in
\dot{E}^{1+\alpha}(\overline{\mathbb R_{T}^{N-1}}).$ Moreover, $f$
and $F$ are finite in $z,$ or they decrease sufficiently rapidly at
infinity.

\begin{lemma} \label{L3}
Problem \eqref{1.54}--\eqref{1.56} has the unique smooth solution
bounded at infinity that satisfies the estimate
\begin{equation}
\left|  u\right| _{E^{2+\alpha}(\overline{\mathbb R_{+,T}^{N}})}\leq
C(\left| f\right| _{E^{\alpha}(\overline{\mathbb
R_{+,T}^{N}})}+\left| F\right| _{E^{1+\alpha
}(\overline{\mathbb R_{T}^{N-1}})}). \label{1.59}%
\end{equation}%
\end{lemma}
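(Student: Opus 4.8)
The plan is to establish the a priori estimate \eqref{1.59} for the elliptic Neumann problem in the half-space with time as a parameter, and then derive existence and uniqueness from it. Since the Laplace equation \eqref{1.54} contains no time derivatives, the variable $t$ enters only through the data $f$ and $F$, and the solution can be written explicitly via the Neumann Green's function of the half-space. First I would recall that the solution of $-\Delta u = f$ in $\mathbb{R}^N_+$ with $\partial u/\partial z_N = F$ on $z_N=0$, decaying at infinity, is given by the Poisson-type representation
\[
u(z,t) = \int_{\mathbb{R}^N_+} G_N(z,\zeta)\, f(\zeta,t)\, d\zeta + \int_{\mathbb{R}^{N-1}} P_N(z,\zeta')\, F(\zeta',t)\, d\zeta',
\]
where $G_N$ is the Neumann function for the half-space (a sum of the free-space fundamental solution and its reflection across $z_N=0$) and $P_N$ is the corresponding single-layer kernel. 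Uniqueness of a bounded solution vanishing at $t=0$ follows from the maximum principle applied for each fixed $t$.

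The core of the argument is the estimate \eqref{1.59}, and it splits into a spatial part and a temporal part. For the spatial part — bounding $\max_t |u(\cdot,t)|^{(2+\alpha)}_{\mathbb{R}^N_+}$ by $\max_t(|f(\cdot,t)|^{(\alpha)} + |F(\cdot,t)|^{(1+\alpha)})$ — I would invoke the classical Schauder theory for the Poisson equation with a Neumann boundary condition in a half-space (e.g. Agmon–Douglis–Nirenberg type estimates, or equivalently the mapping properties of $G_N$ and $P_N$ on Hölder spaces): this is completely standard and applies pointwise in $t$. For the temporal part I would use the structure of the $E^{k+\alpha}$-norm as recalled in \eqref{A.1}–\eqref{A.3}: because the representation formula is linear in the data and the kernels $G_N, P_N$ do not depend on $t$, every finite difference in $t$ of $u$ equals the same operator applied to the finite difference in $t$ of $(f,F)$. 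Hence the seminorms $\langle D^r_x u\rangle^{(\alpha)}_{t}$ and the mixed seminorms $[D^r_x u]^{(\alpha,\alpha)}$ are controlled by the corresponding seminorms of the data via exactly the same Schauder bounds, now applied to difference quotients. Summing the spatial Schauder estimate over the relevant $t$-differences yields \eqref{1.59} with a constant depending only on $N$ and $\alpha$.

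For existence, I would define $u$ by the representation formula above; the finiteness (or rapid decay) of $f$ and $F$ in $z$ guarantees the integrals converge and produce a function decaying at infinity, the interior elliptic regularity gives smoothness in $z$ away from nothing in particular, the boundary Schauder estimate gives $u(\cdot,t)\in E^{2+\alpha}$ up to $z_N=0$, and the just-proved a priori estimate upgrades this to $u\in E^{2+\alpha}(\overline{\mathbb{R}^N_{+,T}})$. Since $f$ and $F$ vanish at $t=0$ together with their $t$-derivatives admitted by their classes, so does $u$, i.e. $u\in\dot E^{2+\alpha}$; in particular $u(z,0)=0$.

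The main obstacle I anticipate is not the existence or the spatial Schauder estimate — both are classical — but the careful bookkeeping of the anisotropic Hölder seminorms in the $t$ variable, specifically verifying that the mixed modulus $[D^r_x u]^{(\alpha,\alpha)}$ appearing in \eqref{A.1} is genuinely controlled by $[D^r_x f]^{(\alpha,\alpha)}$ and the analogous seminorm of $F$, and that no loss occurs at the boundary $z_N=0$ where the Neumann kernel $P_N$ is only weakly singular. The clean way around this is to exploit the fact — emphasized right after \eqref{A.3} — that $u\mapsto$ (its image) commutes with taking $t$-difference quotients, so that one never needs a genuinely parabolic estimate: one simply applies the elliptic boundary Schauder estimate to $u(\cdot,t+h)-u(\cdot,t)$ and divides by $h^\alpha$, then takes a further spatial Hölder quotient. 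This reduces everything to the single frozen-time estimate plus the elementary algebra of the norm \eqref{A.3}.
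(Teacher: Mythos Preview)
Your proposal is correct and follows essentially the same route as the paper: invoke the classical Schauder estimate for the half-space Neumann problem at each fixed $t$, then upgrade to the $E^{k+\alpha}$ norm by applying the same elliptic estimate to the $t$-difference quotients $u_h=(u(\cdot,t)-u(\cdot,t-h))/h^{\alpha}$, exactly as suggested by the equivalent norm \eqref{A.3}. The paper's proof is even terser---it simply cites the potential-theoretic Schauder estimates and then writes out the $u_h$ argument---so your version is a faithful (and somewhat more detailed) expansion of it.
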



\begin{proof}
We omit a detailed proof of this lemma, since it well known in the
case where the spaces $H^{k+\alpha}$ are used instead of the spaces
$E^{k+\alpha}$ and consists in the well-known estimates of the
potential of a simple layer (see, e.g., \cite{D16} and references
therein). In order to prove the lemma for the spaces $E^{k+\alpha},$
it is sufficient to note that the function
\[
u_{h}(x,t)=\frac{u(x,t)-u(x,t-h)}{h^{\alpha}}, \quad h\in (0,1),
\]
satisfies problem \eqref{1.54}--\eqref{1.56} with the change of the
appropriate functions on the right-hand side of the equation and in
the boundary condition by the functions
\[
f_{h}(x,t)=\frac{f(x,t)-f(x,t-h)}{h^{\alpha}}, \quad
F_{h}(x,t)=\frac{F(x,t)-F(x,t-h)}{h^{\alpha}}.
\]
In this case, by virtue of the assumption about data of the problem,
$f_{h}\in H^{\alpha}(\overline{\mathbb R_{+,T}^{N}}),$ $F_{h}\in
H^{1+\alpha}(\overline{\mathbb R_{T}^{N-1}})$ uniformly in $t$ and
$h.$ Therefore,
\begin{equation*}
\sup_{h}\max_{t}|u_{h}|^{(2+\alpha)}\leq
\sup_{h}\max_{t}C(|f_{h}|^{(\alpha)}+|F_{h}|^{(1+\alpha)})\\
\leq C(\left| f\right| _{E^{\alpha}(\overline{R_{+,T}^{N}})}+\left|
F\right| _{E^{1+\alpha }(\overline{R_{T}^{N-1}})}).
\end{equation*}
This yields, with regard for the definition of the spaces
$E^{k+\alpha},$ the assertion of the lemma.
\end{proof}

An analogous proposition is valid also for problem
\eqref{1.54}--\eqref{1.56} if the Neumann condition \eqref{1.55} is
replaced by the Dirichlet condition
\begin{equation}
u(z^{\prime},0,t)=F(z^{\prime},t), \quad (z^{\prime},t)\in\mathbb
R_{T}^{N-1}.\quad
 \label{1.55aaa}%
\end{equation}

\begin{lemma} \label{L3a}
In \eqref{1.55aaa}, let $F\in \dot{E}^{2+\alpha}(\overline{\mathbb
R_{T}^{N-1}}),$ and let  $F$ be finite in $z$ or sufficiently
rapidly decrease at infinity. Then problem \eqref{1.54},
\eqref{1.55aaa}, \eqref{1.56} has the unique smooth solution bounded
at infinity, for which the following estimate is true:
\begin{equation}
\left|  u\right| _{E^{2+\alpha}(\overline{\mathbb R_{+,T}^{N}})}\leq
C(\left| f\right| _{E^{\alpha}(\overline{\mathbb
R_{+,T}^{N}})}+\left| F\right| _{E^{2+\alpha
}(\overline{\mathbb R_{T}^{N-1}})}). \label{1.59aaa}%
\end{equation}%
\end{lemma}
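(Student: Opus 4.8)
The plan is to reduce Lemma~\ref{L3a} (the Dirichlet case) to Lemma~\ref{L3} (the Neumann case), which we are allowed to assume, rather than redo the potential-theoretic estimates from scratch. The idea is to produce, from the given Dirichlet data $F\in\dot E^{2+\alpha}$, an auxiliary function whose normal derivative on the boundary is controlled in $\dot E^{1+\alpha}$, solve the corresponding Neumann problem by Lemma~\ref{L3}, and then correct by an explicitly estimable harmonic extension. Concretely, first I would subtract off a ``lifting'' of the boundary data: let $w(z',z_N,t)$ be any fixed extension of $F$ into the half-space with $|w|_{E^{2+\alpha}(\overline{\mathbb R^N_{+,T}})}\le C|F|_{E^{2+\alpha}(\overline{\mathbb R^{N-1}_T})}$, $w(z,0)=0$, and $w$ finite (or rapidly decaying) in $z$; such extensions are standard (e.g.\ convolution with a smooth kernel in the tangential and normal variables, as in \cite{D10}). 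Then $\tilde u:=u-w$ should solve $-\Delta\tilde u=f+\Delta w=:\tilde f$ in $\mathbb R^N_{+,T}$, $\tilde u(z',0,t)=0$, $\tilde u(z,0)=0$, with $|\tilde f|_{E^{\alpha}}\le C(|f|_{E^{\alpha}}+|F|_{E^{2+\alpha}})$.

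The second step is to handle the homogeneous-Dirichlet problem $-\Delta\tilde u=\tilde f$, $\tilde u|_{x_N=0}=0$ by odd reflection: extend $\tilde f$ oddly across $\{z_N=0\}$ to all of $\mathbb R^N$, solve the whole-space Poisson equation by the Newtonian potential, and observe that the solution is odd in $z_N$, hence vanishes on the hyperplane, so its restriction to $\mathbb R^N_{+,T}$ is the desired solution. The odd extension of an $\dot E^{\alpha}$ function that vanishes on the boundary stays in $\dot E^{\alpha}$ with comparable norm (the H\"older difference quotients across the interface are controlled precisely because the function vanishes there), and the classical Schauder estimate for the Newtonian potential, upgraded to the $E^{k+\alpha}$ scale by exactly the difference-quotient-in-$t$ trick used in the proof of Lemma~\ref{L3} (apply the estimate to $\tilde u_h(x,t)=(\tilde u(x,t)-\tilde u(x,t-h))/h^{\alpha}$, which solves the same problem with right-hand side $\tilde f_h$), gives $|\tilde u|_{E^{2+\alpha}(\overline{\mathbb R^N_{+,T}})}\le C|\tilde f|_{E^{\alpha}(\overline{\mathbb R^N_{+,T}})}$. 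Combining, $|u|_{E^{2+\alpha}}\le|\tilde u|_{E^{2+\alpha}}+|w|_{E^{2+\alpha}}\le C(|f|_{E^{\alpha}}+|F|_{E^{2+\alpha}})$, which is \eqref{1.59aaa}.

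For existence and uniqueness: existence is produced by the construction above ($u=\tilde u+w$). Uniqueness among solutions bounded at infinity follows because the difference of two solutions is a bounded harmonic function on the half-space with zero Dirichlet data and zero initial data, hence identically zero by the maximum principle (applied at each fixed $t$, or after odd reflection to a bounded entire harmonic function, which is constant, hence zero). One should also check the compatibility that makes $u\in\dot E^{2+\alpha}$, i.e.\ that $u$ and its $t$-derivatives admitted by the class vanish at $t=0$; this is inherited from $f\in\dot E^{\alpha}$, $F\in\dot E^{2+\alpha}$ through the linearity of the construction, exactly as remarked after \eqref{2.23} for the $F_i$.

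The main obstacle is bookkeeping rather than conceptual: one must make sure the lifting $w$ of $F$ can be chosen finite (or rapidly decaying) in $z$ and vanishing at $t=0$ together with the right number of $t$-derivatives, so that $\Delta w\in\dot E^{\alpha}$ and the odd reflection of $\tilde f$ is legitimate; and one must verify that the odd extension across $\{z_N=0\}$ does not destroy the $E^{\alpha}$ (in particular the mixed $[\,\cdot\,]^{(\alpha,\alpha)}$) seminorm — this is where the hypothesis $\tilde u|_{z_N=0}=0$, equivalently that $\tilde f$ is odd, is essential. Once these two points are settled, the estimate is immediate from Lemma~\ref{L3}'s method (or from classical half-space Schauder theory for the Dirichlet problem, cited as in \cite{D16}), and the proof can legitimately be abbreviated in the same style as the proof of Lemma~\ref{L3}.
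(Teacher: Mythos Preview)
Your overall strategy---lift the boundary data, reduce to homogeneous Dirichlet data, then use the difference-quotient-in-$t$ trick to pass from $H^{k+\alpha}$ to $E^{k+\alpha}$---is sound, and that last step matches the paper exactly. But the odd-reflection step has a gap: you assert that the odd extension of $\tilde f = f + \Delta w$ across $\{z_N=0\}$ stays in $E^{\alpha}$, justifying this by ``$\tilde u|_{z_N=0}=0$, equivalently $\tilde f$ is odd.'' These are not equivalent, and nothing in your construction of $w$ forces $\tilde f|_{z_N=0}=0$ (you would need $\Delta w|_{z_N=0} = -f|_{z_N=0}$, which you did not impose). The odd extension of a $C^{\alpha}$ function that does not vanish on the reflecting hyperplane has a jump there, so the whole-space Newtonian-potential Schauder estimate is not available; equivalently, the odd extension of $\tilde u$ itself fails to be $C^{2,\alpha}$ across $\{z_N=0\}$ because $\partial_{z_N}^2\tilde u|_{z_N=0}=-\tilde f|_{z_N=0}\neq 0$ in general.

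The fix is easy: either (i) build the lifting $w$ with one more prescribed Taylor coefficient, namely $\partial_{z_N}^2 w|_{z_N=0} = -f|_{z_N=0} - \Delta_{z'}F$, so that $\tilde f|_{z_N=0}=0$ and your reflection goes through; or (ii) skip the reflection entirely and cite the classical half-space Dirichlet Schauder estimate in $H^{2+\alpha}$ (Poisson-kernel / double-layer potential bounds, as in \cite{D16}), then apply the $u_h$ trick. The paper takes route (ii): its proof is literally ``identical to that of the previous lemma,'' meaning the $H^{2+\alpha}$ Dirichlet estimate is taken as known and the upgrade to $E^{2+\alpha}$ is done via $u_h=(u(x,t)-u(x,t-h))/h^{\alpha}$. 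Your approach is more self-contained but, as written, needs one of the corrections above.
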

The proof of this lemma is identical to that of the previous lemma.


Lemmas \ref{L3} and  \ref{L3a} allow us, without any loss of
generality, to consider $f_{1}^{\pm}\equiv 0,$ $f_{2}\equiv 0,$ and
$f^{+}_{3}\equiv 0$ in problem \eqref{1.47}--\eqref{1.50}, so that
only the function $f_{3}^{-}$ is nonzero.

To problem \eqref{1.47}--\eqref{1.50}, we now apply the Laplace
transformation with respect to the variable $t$ and the Fourier
transformation with respect to the variables $x'.$ We denote the
result of such a transformation of the function $f(x',t)$ by
$\widetilde{f}(\xi,p),$ i.e.,
\begin{equation}
\widetilde{f}(\xi,p)=C\int\limits_{0}^{\infty}e^{-pt}\,dt\int\limits_{\mathbb
R^{N-1}}e^{-ix'\xi}f(x,t)\,dx.
 \label{1.60}
\end{equation}
As a result, problem \eqref{1.47}--\eqref{1.50} is reduced to a
boundary-value problem for ordinary differential equations and takes
the form (we denote $h^{\pm}=(h_{1}^{\pm},
h_{2}^{\pm},\dots,h_{N-1}^{\pm})$)
\begin{equation}
\frac{d^{2}\widetilde{u}^{\pm}}{dx_{N}^{2}}-\xi^{2}\widetilde{u}^{\pm}=0,
\quad x_{N}>0 \ (x_{N}<0),
 \label{1.61}
\end{equation}
\begin{equation}
\widetilde{u}^{+}-\widetilde{u}^{-}+A\widetilde{\rho}=0, \quad
x_{N}=0,
 \label{1.62}
\end{equation}
\begin{equation}
\widetilde{\rho}(p+\varepsilon \xi^{2}-ih^{+}\xi)+a^{+}\frac{d
\widetilde{u}^{+}}{dx_{N}}=0, \quad x_{N}=0,
 \label{1.63}
\end{equation}
\begin{equation}
\widetilde{\rho}(p+\varepsilon \xi^{2}-ih^{-}\xi)+a^{-}\frac{d
\widetilde{u}^{-}}{dx_{N}}=\widetilde{f}_{3}^{-} , \quad x_{N}=0.
 \label{1.63a}
\end{equation}
The condition additional to relations \eqref{1.61}--\eqref{1.63a} is
the condition of boundedness at infinity, i.e.,
\begin{equation}
|\widetilde{u}^{\pm}|\leq C, \quad x_{N}\rightarrow \pm \infty.
 \label{1.64}
\end{equation}

With regard for condition \eqref{1.64}, Eq. \eqref{1.61} yields
\begin{equation}
\widetilde{u}^{+}(\xi,p,x_{N})=\widetilde{g}^{+}(\xi,p)e^{-x_{N}|\xi|},
 \label{1.65}
\end{equation}
\begin{equation}
\widetilde{u}^{-}(\xi,p,x_{N})=\widetilde{g}^{-}(\xi,p)e^{x_{N}|\xi|},
 \label{1.66}
\end{equation}
where $\widetilde{g}^{\pm}(\xi,p)=\widetilde{u}^{\pm}|_{x_{N}=0}$
are some functions.

Substituting these formulas in \eqref{1.63} and \eqref{1.63a}, we
obtain
\begin{equation}
\widetilde{\rho}(p+\varepsilon \xi^{2}
-ih^{+}\xi)-a^{+}\widetilde{g}^{+}|\xi|=0, \label{B.0}
\end{equation}
\begin{equation}
\widetilde{\rho}(p+\varepsilon \xi^{2}
-ih^{-}\xi)+a^{-}\widetilde{g}^{-}|\xi|=\widetilde{f}_{3}^{-}.
\label{B.1}
\end{equation}
Let us divide relations \eqref{B.0} and \eqref{B.1} by $a^{+}$ and
$a^{-},$ respectively. Summing these equalities and taking relations
\eqref{1.62} into account, we obtain
\[
\widetilde{\rho}\Big[ \Big(\frac{1}{a^{+}}+\frac{1}{a^{-}} \Big)p
-i\Big(\frac{h^{+}}{a^{+}}+\frac{h^{-}}{a^{-}} \Big)\xi +\varepsilon
\Big(\frac{1}{a^{+}}+\frac{1}{a^{-}} \Big)\xi^{2}+A|\xi|
\Big]=\widetilde{f}_{3}^{-}/a^{-}
\]
or
\begin{equation}
\widetilde{\rho}=\frac{\widetilde{f}}{p-iH\xi+\varepsilon
\xi^{2}+B|\xi|},
 \label{1.69}
\end{equation}
where
\[
\widetilde{f}=\widetilde{f}_{3}^{-}/\Big[a^{-}\Big(\frac{1}{a^{+}}+\frac{1}{a^{-}}
\Big)\Big], \quad H=\Big(\frac{h^{+}}{a^{+}}+\frac{h^{-}}{a^{-}}
\Big)/\Big(\frac{1}{a^{+}}+\frac{1}{a^{-}} \Big),
\]
\[
B=A/ \Big(\frac{1}{a^{+}}+\frac{1}{a^{-}} \Big).
\]
It is obvious that
\begin{equation}
|f|_{E^{1+\alpha}(R^{N-1}_{T})}=C|f^{-}_{3}|_{E^{1+\alpha}(R^{N-1}_{T})}.
 \label{B.2}
\end{equation}

Thus, \eqref{1.69} gives the formula for the unknown function
$\widetilde{\rho}$ in terms of the Fourier--Laplace transform.

We note that the denominator in \eqref{1.69} does not become zero at
$\mathbb{Re}(p)>0.$ By performing the inverse Laplace--Fourier
transformation in \eqref{1.69}, we get
\begin{equation}
\rho(x',t)=\int\limits_{0}^{t}\int\limits_{\mathbb
R^{N-1}}K_{\varepsilon}(x'-\xi,t-\tau)f(\xi,\tau)\,d\xi\, d\tau,
 \label{1.70}
\end{equation}
where $K_{\varepsilon}(x,t)$ is the inverse Laplace--Fourier
transform of the function
$(p-iH\xi+\varepsilon\xi^{2}+B|\xi|)^{-1},$ i.e., ($a>0$)
\begin{equation}
K_{\varepsilon}(x',t)=C\int\limits_{\mathbb
R^{N-1}}e^{ix'\xi}\,d\xi\int\limits_{a-i\infty}^{a+i\infty}\frac{dp}{p-iH\xi+\varepsilon
\xi^{2}+B|\xi|}.
 \label{1.71}
\end{equation}

The inverse Laplace transform of the function
$(p-iH\xi+\varepsilon\xi^{2}+B|\xi|)^{-1}$ can be easily calculated
as
\begin{equation}
\widehat{K_{\varepsilon}}(\xi,t)=e^{iH\xi t-\varepsilon
\xi^{2}t-B|\xi|t},
 \label{1.72}
\end{equation}
where the symbol $\widehat{v}$ stands for the Fourier transform of
the function $v.$ Thus,
\begin{multline}
\rho(x',t)=C\int\limits_{0}^{t}d\tau \int\limits_{\mathbb
R^{N-1}}e^{ix'\xi}e^{iH\xi\tau}e^{-\varepsilon\xi^{2}
\tau}e^{-B|\xi|\tau}\widehat{f}(\xi,t-\tau)\,d\xi\\
=C\int\limits_{0}^{t}d\tau \int\limits_{\mathbb
R^{N-1}}e^{i(x'+H\tau)\xi}e^{-\varepsilon\xi^{2}\tau}e^{-B|\xi|\tau}\widehat{f}(\xi,t-\tau)\,d\xi\\
=\int\limits_{0}^{t}d\tau \int\limits_{\mathbb
R^{N-1}}K_{\varepsilon}(y,\tau)f(x'+H\tau-y,t-\tau)\,dy,
 \label{1.73}
\end{multline}
where
\begin{equation}
K_{\varepsilon}(x',t)=C\int\limits_{\mathbb
R^{N-1}}e^{ix'\xi}e^{-\varepsilon\xi^{2}t}e^{-B|\xi|t}\,d\xi
 \label{1.74}
\end{equation}
is the inverse Fourier transform of the function
$e^{-\varepsilon\xi^{2}t}e^{-B|\xi|t}.$


In view of the well-known properties of the Fourier transformation,
\begin{equation}
K_{\varepsilon}(x',t)=\int\limits_{\mathbb
R^{N-1}}\Gamma_{\varepsilon}(x'-y,t)G(y,t)\,dy=\Gamma_{\varepsilon}*_{x}G,
 \label{A5.1}
\end{equation}
where, as is known,
\begin{equation}
\Gamma_{\varepsilon}(x',t)=C(\varepsilon t
)^{-\frac{N-1}{2}}e^{-\frac{(x')^{2}}{4\varepsilon t}}
 \label{A5.2}
\end{equation}
is the inverse Fourier transform of the function $e^{-\varepsilon
\xi^{2}t}$ (that is the fundamental solution of the heat equation
with the coefficient $\varepsilon$), and $G(x',t)$ is the inverse
Fourier transform of the function $e^{-B|\xi|t}.$

The function $G(x',t)$ can be represented explicitly. To this end,
we note that the solution $V(z)$ of the Dirichlet problem for the
Laplace equation in the half-space $\{z_{N}\geq 0\}\subset\mathbb
R^{N}$ can be expressed in the form (the corresponding Green's
function can be easily constructed, as is known, by the method of
reflection)
\begin{equation}
V(z)=C\int\limits_{\mathbb
R^{N-1}}\frac{z_{N}}{[(z'-\eta)^{2}+z_{N}^{2}]^{\frac{N}{2}}}\varphi(\eta)\,d\eta,
 \label{1.76}
\end{equation}
where $\varphi(z')=V(z',0),$ $z'=(z_{1},\dots,z_{N-1}).$ On the
other hand, such Dirichlet problem in a half-space can be solved
with the use of the Fourier transformation with respect to the
variables $z'$ that gives, as is easily verified,
\begin{equation}
\widehat{V}(\xi,z_{N})=C\widehat{\varphi}(\xi)e^{-z_{N}|\xi|}.
 \label{1.77}
\end{equation}
Comparing \eqref{1.77} and \eqref{1.76}, we may conclude that the
inverse Fourier transform of the function $e^{-z_{N}|\xi|}$ is
$Cz_{N}/(z'^{2}+z_{N}^{2})^{N/2}.$ In other words, by replacing
$z_{N}$ by $Bt,$ we have
\begin{equation}
G(x',t)=C\int\limits_{\mathbb R^{N-1}}e^{ix'\xi}e^{-B|\xi|t}\,d\xi
=C\frac{t}{(x'^{2}+B^{2}t^{2})^{\frac{N}{2}}}, \quad x'\in\mathbb
R^{N-1}.
 \label{1.78}
\end{equation}


With regard for the well-known properties of the functions
$\Gamma_{\varepsilon}(x',t)$ and the explicitly given $G(x',t)$ in
\eqref{1.78}, we can verify that the kernel $K_{\varepsilon}(x',t)$
possesses the properties (analogous to those of
$\Gamma_{\varepsilon}(x',t)$):
\begin{equation}
\int\limits_{\mathbb
R^{N-1}}D^{r}_{x}D^{s}_{t}K_{\varepsilon}(y,t)\,dy=\begin{cases}1, &
|r|+s=0,\\0, & |r|+s>0.\end{cases}
 \label{A5.3}
\end{equation}
In addition, at any $\varepsilon \in (0,1],$ the derivatives of the
kernel $K_{\varepsilon}(x',t)$ have properties that inherit those of
the kernel $G(x',t)$ in \eqref{1.78}. Namely, the following lemma is
valid.

\begin{lemma} \label{LA5.1}
The function $K_{\varepsilon}(x',t)$ and its derivatives with
respect to $x$ satisfy the estimates
\begin{equation}
|D^{r}_{x}K_{\varepsilon}(x',t)|\leq
C(x'^{2}+t^{2})^{-\frac{N-1}{2}-|r|}, \quad |r|=0,1,2,
 \label{A5.4}
\end{equation}
where the constant $C$ is independent of $\varepsilon.$
\end{lemma}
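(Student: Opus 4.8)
The plan is to exploit the product structure $e^{-\varepsilon|\xi|^{2}t}e^{-B|\xi|t}$ of the symbol, i.e. the representation \eqref{A5.1}, $K_{\varepsilon}=\Gamma_{\varepsilon}*_{x}G$, where $\Gamma_{\varepsilon}$ is the Gaussian \eqref{A5.2} (the heat kernel with diffusivity $\varepsilon$) and $G$ is the explicit kernel \eqref{1.78}, which is positively homogeneous of degree $1-N$ under $(x',t)\mapsto(\mu x',\mu t)$ and $C^{\infty}$ off the origin. From \eqref{1.78} one reads off at once that $G$ and each $D^{r}_{x}G$ obey bounds of the type \eqref{A5.4} (with $K_{\varepsilon}$ replaced by $G$), that $\|G(\cdot,t)\|_{L^{1}(\mathbb R^{N-1})}$ is a constant while $\|D^{r}_{x}G(\cdot,t)\|_{L^{1}(\mathbb R^{N-1})}\le Ct^{-|r|}$, and that $|D^{r}_{x}\Gamma_{\varepsilon}(w,t)|\le C(\varepsilon t)^{-\frac{N-1}{2}-\frac{|r|}{2}}e^{-cw^{2}/(\varepsilon t)}$. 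So the entire content of the lemma is that the decay of $G$ survives convolution with $\Gamma_{\varepsilon}$ with a constant that does not blow up as $\varepsilon\to0$. Since for $t>0$ both factors are smooth with rapid decay, I would differentiate under the integral and, wherever convenient, move $x$-derivatives from $G$ onto $\Gamma_{\varepsilon}$ by an integration by parts in the convolution variable.

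Fix $(x',t)$, set $R=(x'^{2}+t^{2})^{1/2}$, and write $D^{r}_{x}K_{\varepsilon}(x',t)=\int\Gamma_{\varepsilon}(w,t)\,(D^{r}_{x}G)(x'-w,t)\,dw$. I would split the $w$-integral by $|x'-w|\ge R/2$ or $|x'-w|<R/2$. On $\{|x'-w|\ge R/2\}$ one has $(x'-w)^{2}+t^{2}\ge R^{2}/4$, hence $|(D^{r}_{x}G)(x'-w,t)|\le CR^{1-N-|r|}$, and since $\int\Gamma_{\varepsilon}(\cdot,t)=1$ this part is $\le CR^{1-N-|r|}$. On $\{|x'-w|<R/2\}$ there are two sub-cases. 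If $|x'|\ge\tfrac34R$, then $|w|\gtrsim R$ throughout the region; I integrate by parts to put all derivatives on $\Gamma_{\varepsilon}$, use $|D^{r}_{x}\Gamma_{\varepsilon}(w,t)|\le C(\varepsilon t)^{-\frac{N-1}{2}-\frac{|r|}{2}}e^{-cR^{2}/(\varepsilon t)}$ together with $\int|G(x'-w,t)|\,dw\le\|G(\cdot,t)\|_{L^{1}}=C$, and then invoke the elementary inequality $\sup_{s>0}s^{-a}e^{-cR^{2}/s}\le C_{a}R^{-2a}$ with $s=\varepsilon t$ and $a=\frac{N-1}{2}+\frac{|r|}{2}$ to get $\le CR^{1-N-|r|}$; the boundary terms produced sit on $\{|x'-w|=R/2\}$, where $G$ and its lower-order derivatives are bounded by the appropriate negative power of $R$ and $\Gamma_{\varepsilon}$ is Gaussian-small, and are controlled the same way. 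If instead $|x'|<\tfrac34R$, then necessarily $t\sim R$, and the region is handled by $\int\Gamma_{\varepsilon}\le1$ and $\|D^{r}_{x}G(\cdot,t)\|_{L^{\infty}}\le Ct^{1-N-|r|}\le CR^{1-N-|r|}$. Finally, the cancellation/moment relations \eqref{A5.3} follow from $\int\Gamma_{\varepsilon}(\cdot,t)=\int G(\cdot,t)=1$ and differentiation under the integral, and $\int_{\mathbb R^{N-1}}|K_{\varepsilon}(y,t)|\,dy\le C$ from $\|\Gamma_{\varepsilon}(\cdot,t)\|_{L^{1}}\|G(\cdot,t)\|_{L^{1}}=C$.

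The only real obstacle is keeping the constants independent of $\varepsilon$ in the zone where the regularization length $\sqrt{\varepsilon t}$ is small compared with $R$. The temptation there is to bound the contribution of $\{|x'-w|<R/2\}$ by $(\sup\Gamma_{\varepsilon})\,\|D^{r}_{x}G(\cdot,t)\|_{L^{1}}\sim(\varepsilon t)^{-\frac{N-1}{2}}e^{-cR^{2}/(\varepsilon t)}\,t^{-|r|}$, which loses a power of $t$ (equivalently of $R$) and yields a bound of the wrong homogeneity. The cure, as above, is to integrate by parts so that the derivatives fall on the Gaussian, producing the sharp weight $(\varepsilon t)^{-\frac{N-1}{2}-\frac{|r|}{2}}$; the inequality $\sup_{s>0}s^{-a}e^{-cR^{2}/s}\le C_{a}R^{-2a}$ then converts the Gaussian smallness into precisely the power $R^{1-N-|r|}$ required by \eqref{A5.4}, uniformly in $\varepsilon\in(0,1]$. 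Everything else --- the far region, the boundary terms, the case $t\sim R$, and \eqref{A5.3} --- is routine bookkeeping with the explicit kernels $\Gamma_{\varepsilon}$ and $G$.
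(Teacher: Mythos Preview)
Your approach is essentially the paper's: both write $K_{\varepsilon}=\Gamma_{\varepsilon}*_{x}G$, split the convolution into a far piece (where the pointwise decay of $D^{r}_{x}G$ and $\int\Gamma_{\varepsilon}=1$ suffice) and a near piece, and in the near piece distinguish the regime $t$ dominant (crude $L^{\infty}$ bound on $D^{r}_{x}G$) from the regime $|x'|$ dominant (integrate by parts to put the derivatives on the Gaussian and exploit $e^{-cR^{2}/(\varepsilon t)}$). The only cosmetic differences are that the paper splits at $|x'-y|=|x'|/2$ rather than $R/2$ and phrases the near-region dichotomy as $t\gtrless|x'|$; the bound you obtain, $|D^{r}_{x}K_{\varepsilon}|\le CR^{\,1-N-|r|}$, is precisely what the paper's computations \eqref{A5.7}--\eqref{A5.10} give for $|r|=1$ (so the exponent $-\tfrac{N-1}{2}-|r|$ in the displayed statement should be read as $-\tfrac{N-1+|r|}{2}$).
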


\begin{proof}
It is easily seen that the function $G(x',t)$ satisfies the
estimates
\begin{equation}
|D^{r}_{x}G(x',t)|\leq Ct((x')^{2}+t^{2})^{-\frac{N}{2}-|r|}\leq
C((x')^{2}+t^{2})^{-\frac{N-1}{2}-|r|}, \quad |r|=0,1,2.
 \label{A5.6}
\end{equation}

For simplicity, we consider only the case $|r|=1,$ since the
remaining cases can be studied quite analogously. Let
$i=\overline{1,N-1}.$ Then
\begin{multline}
K_{\varepsilon x_{i}}(x',t)=\int\limits_{\mathbb
R^{N-1}}\Gamma_{\varepsilon}(y,t)G_{x_{i}}(x'-y,t)\,dy
=\int\limits_{|x'-y|\geq
|x'|/2}\Gamma_{\varepsilon}(y,t)G_{x_{i}}(x'-y,t)\,dy
\\+\int\limits_{|x'-y|<
|x'|/2}\Gamma_{\varepsilon}(y,t)G_{x_{i}}(x'-y,t)\,dy  \equiv
A_{1}+A_{2}.
 \label{A5.5}
\end{multline}
In view of \eqref{1.78}, the estimate
\[
|G_{x_{i}}(x'-y,t)|\leq C(x^{'2}+t^{2})^{-\frac{N}{2}}
\]
is valid for the function $G_{x_{i}}(x'-y,t)$ on the set $|x'-y|\geq
|x'|/2.$ Therefore,
\begin{equation}
|A_{1}|\leq C(x^{'2}+t^{2})^{-\frac{N}{2}}\int\limits_{\mathbb
R^{N-1}}\Gamma_{\varepsilon}(y,t)\,dy=
C(x^{'2}+t^{2})^{-\frac{N}{2}}.
 \label{A5.7}
\end{equation}
Passing to the estimate of $A_{2},$ we note that the quantities
$|y|$ and $|x'|$ are equivalent on the set $|x'-y|<|x'|/2.$
Therefore, on this set with some $\gamma>0,$ we have
\begin{equation}
e^{-\frac{y^{2}}{4\varepsilon t}}\leq
e^{-\gamma\frac{x^{'2}}{\varepsilon t}}.
 \label{A5.8}
\end{equation}
Then we consider two cases. First, let $t\geq |x'|.$ Then, by
estimating $|G_{x_{i}}|\leq Ct^{-N},$ we have

\[
|A_{2}| \leq C (\varepsilon
t)^{-\frac{N-1}{2}}e^{-\gamma\frac{x^{'2}}{\varepsilon t}} \cdot
Measure\{|x'-y|< |x'|/2\}t^{-N}\\=
\]

\begin{equation}
=C\Big(\frac{x'^{2}}{\varepsilon t}
\Big)^{\frac{N-1}{2}}e^{-\gamma\frac{x^{'2}}{\varepsilon
t}}t^{-N}\leq Ct^{-N}\leq C(x^{'2}+t^{2})^{-\frac{N}{2}},
 \label{A5.81}
\end{equation}

since $t\geq |x'|.$

But if $t<|x'|,$ then we apply the integration by parts and
represent $A_{2}$ as follows:
\begin{multline*}
A_{2}=-\int\limits_{|x'-y|<
|x'|/2}\Gamma_{\varepsilon}(y,t)G_{y_{i}}(x'-y,t)\,dy\\
=-\int\limits_{\left|  y-x^{\prime}\right|  =\frac{1}{2}\left|
x^{\prime }\right|
}\Gamma_{\varepsilon}(y,t)G(x^{\prime}-y,t)\,dS_{y} \\
+\int \limits_{\left|  y-x^{\prime}\right|  <\frac{1}{2}\left|
x^{\prime}\right| }\Gamma_{\varepsilon y_{i}}(y,t)G(x^{\prime}-y,t)
 \equiv I_{1}+I_{2}.
\end{multline*}

Taking the relation $|x'-y|=|x'|/2$ into account, by virtue of
\eqref{A5.6} and \eqref{A5.8}, we have
\begin{equation}
\left|  I_{1}\right|  \leq C(\varepsilon
t)^{-\frac{N-1}{2}}e^{-\gamma
\frac{x^{\prime2}}{\varepsilon t}}\left|  x^{\prime}\right|  ^{N-2}%
(x^{\prime2}+t^{2})^{-\frac{N-1}{2}}\\
 \leq C\left|
x^{\prime}\right| ^{-1}(x^{\prime2}+t^{2})^{-\frac{N-1}{2}}\leq
C(x^{\prime2}+t^{2})^{-\frac{N}{2}},\label{A5.9}%
\end{equation}
since $t\leq |x'|.$

Analogously, in view of \eqref{A5.8} and properties of the function
$\Gamma_{\varepsilon y_{i}},$ we have
\[
\left|  I_{2}\right|  \leq C(\varepsilon t)^{-\frac{N}{2}}e^{-\gamma
\frac{x^{\prime 2}}{\varepsilon t}}\int\limits_{\left|
y-x^{\prime}\right| <\frac{1}{2}\left| x^{\prime}\right|
}\frac{t\,dy}{\big[  \left(  x^{\prime }-y\right) ^{2}+t^{2}\big]
^{\frac{N}{2}}}.
\]
Performing the change $x^{\prime}-y=tz$ with $dy=t^{N-1}dz$ in the
last integral, we obtain
\begin{multline}
\left|  I_{2}\right|  \leq C(\varepsilon t)^{-\frac{N}{2}}e^{-\gamma
\frac{x^{\prime2}}{\varepsilon t}}t^{N}t^{-N}\int\limits_{\mathbb
R^{N-1}}\frac
{t\,dy}{(z^{2}+1)^{\frac{N}{2}}}\\
\leq C\Big(  \frac{x^{\prime2}}{\varepsilon t}\Big) ^{N}e^{-\gamma
\frac{x^{\prime2}}{\varepsilon t}}\left| x^{\prime}\right| ^{-N}\leq
C\left|  x^{\prime}\right|  ^{-N}\leq C(x^{\prime2}+t^{2})^{-\frac{N}{2}%
}.\label{A5.10}%
\end{multline}

Thus, estimates \eqref{A5.10}, \eqref{A5.9}, \eqref{A5.81}, and
\eqref{A5.7} yield estimate \eqref{A5.4} of the lemma in the case
$|r|=1.$

The remaining estimates are proved analogously.
\end{proof}

By virtue of properties \eqref{A5.3} and \eqref{A5.4} in the full
analogy to \cite[Chap.~III]{D16}, we obtain the estimate of the
smoothness of potential \eqref{1.73} by the variable $x'$:
\begin{equation}
\max_{t}|\rho(\cdot,t)|^{(2+\alpha)}_{\mathbb R^{N-1}_{T}}\leq C
\max_{t} |f|^{(1+\alpha)}_{\mathbb R^{N-1}_{T}}. \label{A5.11}
\end{equation}

Analogously to the proof of Lemma \ref{L3}, let us consider the
functions
\[
\rho_{h}=\frac{\rho(x',t)-\rho(x',t-h)}{h^{\alpha}}, \quad
u^{\pm}_{h}=\frac{u^{\pm}(x',t)-u^{\pm}(x',t-h)}{h^{\alpha}}.
\]
These functions satisfy the same problem \eqref{1.47}--\eqref{1.50}
with the appropriate right-hand sides. Therefore, we see, by
completely repeating the previous reasoning, that the function
$\rho_{h}(x',t)$ is potential \eqref{1.73} with the density
\[
f_{h}=\frac{f(x',t)-f(x',t-h)}{h^{\alpha}}
\]
instead of $f.$ Thus, \eqref{A5.11} yields
\begin{equation}
|\rho|_{E^{2+\alpha}(\mathbb R^{N-1}_{T})}\leq C
\sup_{t,h}|f_{h}|^{(1+\alpha)_{\mathbb R^{N-1}}}\leq
C|f|_{E^{1+\alpha}(\mathbb R^{N-1}_{T})}.
 \label{A5.12}
\end{equation}

We now represent relation \eqref{1.69} in the form
\[
\rho_{t}-\varepsilon
\triangle_{x'}\rho-B\triangle_{x'}(\Lambda\rho)+\sum_{i=1}^{N}H_{i}\rho_{x_{i}}=f(x',t)
\]
or
\begin{equation}
\rho_{t}-\varepsilon \triangle_{x'}\rho=F(x',t)\equiv
f+B\triangle_{x'}(\Lambda\rho)-\sum_{i=1}^{N}H_{i}\rho_{x_{i}},
 \label{A5.13}
\end{equation}
where $\Lambda \rho$ is the operator with the symbol $|\xi|^{-1},$
i.e., $\Lambda: \ \widetilde{\rho}\rightarrow
\widetilde{\rho}/|\xi|.$ As is well known (see, e.g., \cite{St}),
\[
\Lambda \rho(x',t)=C\int\limits_{\mathbb
R^{N-1}}\frac{\rho(y,t)}{|x'-y|^{N-2}}\,dy.
\]
Moreover, analogously to the standard H\"{o}lder spaces with respect
to $x',$
\begin{equation}
|\Lambda\rho|_{E^{k+\alpha}(\mathbb R^{N-1}_{T})}\leq
C|\rho|_{E^{k-1+\alpha}(\mathbb R^{N-1}_{T})}, \quad k=1,2,3.
 \label{A5.13a}
\end{equation}
Thus, \eqref{A5.13a}, \eqref{A5.13}, and \eqref{A5.12} yield
\begin{equation}
|F|_{E^{1+\alpha}(\mathbb R^{N-1}_{T})}\leq
C(|\rho|_{E^{2+\alpha}(\mathbb R^{N-1}_{T})}+
|f|_{E^{1+\alpha}(\mathbb R^{N-1}_{T})})\leq
C|f|_{E^{1+\alpha}(\mathbb R^{N-1}_{T})}.
 \label{A5.14}
\end{equation}

Since $f(x^{\prime},0)=0$ and
$\rho(x^{\prime},0)=\rho_{t}(x^{\prime},0)=0,$ Eq. \eqref{A5.13}
yields, completely analogously to \cite[Chap.~IV]{D10},
\begin{equation}
\varepsilon\left\langle D_{x}^{3}\rho\right\rangle _{x,\mathbb
R_{T}^{N-1}}^{(\alpha )}\leq C\left\langle D_{x}F\right\rangle
_{x,\mathbb R_{T}^{N-1}}^{(\alpha
)}.\label{A5.15}%
\end{equation}
In addition, with regard for the function
 $\rho_{h}=(\rho(x^{\prime},t)-\rho(x^{\prime},t-h))/h^{\alpha},$ we have
\begin{equation}
\varepsilon\left[  D_{x}^{3}\rho\right] _{x,t,\mathbb
R_{T}^{N-1}}^{(\alpha)}\leq C\left[  D_{x}F\right] _{x,t,\mathbb
R_{T}^{N-1}}^{(\alpha)}.
\label{A5.16}%
\end{equation}
Then it follows from \eqref{A5.13} that
\begin{equation}
\left\langle D_{x}\rho_{t}\right\rangle _{x,\mathbb
R_{T}^{N-1}}^{(\alpha)}+\left[ D_{x}\rho_{t}\right] _{x,t,\mathbb
R_{T}^{N-1}}^{(\alpha)}\leq C\big(  \left\langle D_{x}F\right\rangle
_{x,\mathbb R_{T}^{N-1}}^{(\alpha)}+\left[ D_{x}F\right]
_{x,t,\mathbb R_{T}^{N-1}}^{(\alpha)}\big).
\label{A5.17}%
\end{equation}

By virtue of the finiteness of the function $\rho(x',t)$ relations
\eqref{A5.15}--\eqref{A5.17} and \eqref{A5.14} yield
\begin{equation}
\left|  \rho\right| _{E^{2+\alpha}(\mathbb R_{T}^{N-1}
)}+\varepsilon\left| \rho\right| _{E^{3+\alpha}(\mathbb R_{T}^{N-1}
)}+\left|  \rho_{t}\right| _{E^{1+\alpha}(\mathbb R_{T}^{N-1} )}\leq
C\left|  f\right|  _{E^{1+\alpha}(\mathbb R_{T} ^{N-1})},
\label{A5.18}%
\end{equation}
where the constant $C$ is independent of $\varepsilon,$ i.e.,
\begin{equation}
\left|  \rho\right| _{P^{2+\alpha}(\mathbb R_{T}^{N-1}
)}+\varepsilon\left| \rho\right| _{P^{3+\alpha}(\mathbb R_{T}^{N-1}
)}\leq C\left|  f\right| _{E^{1+\alpha}(\mathbb R_{T}^{N-1} )},
\label{A5.19}%
\end{equation}
which gives the required estimate for the function $\rho(x',t).$

Possessing the estimate for the function $\rho(x',t),$ we can
consider the functions $u^{\pm}(x,t)$ as solutions of the Neumann
problems in the appropriate domains with the condition
\begin{equation}
a^{\pm}\frac{\partial u^{\pm}}{\partial
x_{N}}\Big|_{x_{N}=0}=F^{\pm}_{1}=f^{\pm}_{3}-\rho_{t}+\varepsilon
\Delta_{x^{\prime}}\rho-h^{\pm}\nabla\rho.
\label{A5.20}%
\end{equation}
In this case, $F^{\pm}_{1}$ are finite, and, by virtue of
\eqref{A5.19}, we have
\begin{equation}
\left|  F^{\pm}_{1}\right|  _{E^{1+\alpha}(\mathbb R_{T}^{N-1}
)}\leq C\left| f\right| _{E^{1+\alpha}(\mathbb R_{T}^{N-1} )}.
\label{A5.21}%
\end{equation}
Then Lemma \ref{L3} yields
\begin{equation}
\left|  u^{\pm}\right|  _{E^{1+\alpha}(\mathbb R_{T}^{N-1} )}\leq
C\left| f\right| _{E^{1+\alpha}(\mathbb R_{T}^{N-1} )}.
\label{A5.22}%
\end{equation}

Estimate \eqref{A5.22} together with estimate \eqref{A5.19} complete
the proof of Theorem \ref{T2}. $\square$


\section{A linear problem in the domain $\Omega_{T}$} \label{s4}

In this section, we prove the solvability of a linear problem
corresponding to problem \eqref{1.31}--\eqref{1.35} with the given
right-hand sides from the appropriate classes. In this case, we
regularize the boundary condition on the surface $\Gamma$ in the
same manner, as it was made in \cite{M,DN4,DN5}.

We now consider the problem of the determination of the unknown
functions $u^{\pm}(x,t)$ defined in the domains
$\overline{\Omega}^{\pm}_{T},$ respectively, and the unknown
function $\rho(x,t)$ defined on the surface $\Gamma_{T}$ by the
conditions
\begin{equation}
-\triangle u^{\pm}+b^{\pm}(x,t)u^{\pm}=f_{1}^{\pm}, \quad (x,t)\in
\Omega_{T}^{\pm}, \label{1.91}
\end{equation}
\begin{equation}
u^{+}-u^{-}+A(x,t)\rho=f_{2}, \quad (x,t)\in \Gamma_{T},
\label{1.92}
\end{equation}
\begin{equation}
\rho_{t}-\varepsilon \triangle_{\Gamma}\rho+a^{\pm}\frac{\partial
u^{\pm}}{\partial
\overrightarrow{n}}+\sum_{i=1}^{N-1}h_{i}^{\pm}(x,t)\rho_{\omega_{i}}=f_{3}^{\pm},
 \quad (x,t)\in \Gamma_{T},
\label{1.93}
\end{equation}
\begin{equation}
u^{\pm}(x,0)=0, \quad \rho(x,0)=0, \label{1.94}
\end{equation}
\begin{equation}
u^{\pm}(x,t)=f_{4}^{\pm},  \quad (x,t)\in \Gamma_{T}^{\pm},
\label{1.95}
\end{equation}
where $\triangle_{\Gamma}$ is the Laplace operator on the surface
$\Gamma$ (see, e.g., \cite{M}), $a^{\pm}=const>0,$ $b^{\pm}(x,t)\in
E^{\alpha}(\overline{\Omega}_{T}^{\pm}),$ $A(x,t)\in
E^{2+\alpha}(\Gamma_{T}),$ $h_{i}^{\pm}(x,t)\in
E^{1+\alpha}(\Gamma_{T}),$ and the conditions $\nu\leq A(x,t)\leq C$
and $b^{\pm}(x,t)\geq \nu>0$ are satisfied. The right-hand sides
$f_{i}$ in relations \eqref{1.91}--\eqref{1.95} are assumed to be
such that the quantities
\[
\mathcal{M}^{\pm}_{T}\equiv
|f_{1}^{\pm}|_{E^{\alpha}(\overline{\Omega}^{\pm}_{T})}+
|f_{2}|_{E^{2+\alpha}(\Gamma_{T})}+|f_{3}^{\pm}|_{E^{1+\alpha}(\Gamma_{T})}+
|f_{4}^{\pm}|_{E^{2+\alpha}(\Gamma^{\pm}_{T})}<\infty,
\]
\begin{equation}
 \mathcal{M}_{T}\equiv \mathcal{M}^{+}_{T}+\mathcal{M}^{-}_{T}\label{1.96}
\end{equation}
are finite, and
\begin{equation}
f^{\pm}_{1}(x,0)=0 , \quad f_{2}(x,0)=0, \quad f_{3}^{\pm}(x,0)=0,
\quad f_{4}^{\pm}(x,0)=0, \label{1.97}
\end{equation}
i.e., all functions $f_{k}$ belong to spaces marked with a dot.

The following theorem is valid.

\begin{theorem} \label{T3}
If conditions \eqref{1.96} and \eqref{1.97} are satisfied, problem
\eqref{1.91}--\eqref{1.95} has the unique solution at any
$\varepsilon > 0$ from the space $u^{\pm}\in
\dot{E}^{2+\alpha}(\overline{\Omega}_{T}^{\pm}),$ $\rho \in
\dot{P}^{3+\alpha}(\Gamma_{T}),$ and the estimate
\begin{equation}
|u^{+}|_{E^{2+\alpha}(\overline{\Omega}^{+}_{T})}+
|u^{-}|_{E^{2+\alpha}(\overline{\Omega}^{-}_{T})}+
|\rho|_{P^{2+\alpha}(\Gamma_{T})}+\varepsilon
|\rho|_{P^{3+\alpha}(\Gamma_{T})}\leq C_{T}\mathcal{M}_{T},
 \label{1.99}
\end{equation}
where the constant $C_{T}$ from \eqref{1.99} is independent of
$\varepsilon\in (0,1],$ is true.

At $\varepsilon=0,$ problem \eqref{1.91}--\eqref{1.95} has the
unique solution from the spaces $u^{\pm}\in
\dot{E}^{2+\alpha}(\overline{\Omega}_{T}^{\pm}),$ $\rho \in
\dot{P}^{2+\alpha}(\Gamma_{T}),$ and estimate \eqref{1.99} with
$\varepsilon=0$ is valid.
\end{theorem}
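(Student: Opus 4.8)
The plan is to deduce Theorem~\ref{T3} from an a~priori estimate of the form \eqref{1.99} (with the $\varepsilon$-independence for $\varepsilon\in(0,1]$) together with the method of continuity, using Theorem~\ref{T2} as the model estimate across the interface $\Gamma$, Lemmas~\ref{L3} and \ref{L3a} as the model estimates near $\Gamma^{\pm}$, and classical interior Schauder theory for the Poisson equation in the remaining part of $\Omega$. The first and principal step is the a~priori estimate. I would take a finite covering of $\overline{\Omega}$ by small balls of three types --- lying strictly inside $\Omega^{+}$ or $\Omega^{-}$, centered on $\Gamma^{\pm}$, centered on $\Gamma$ --- and a subordinate partition of unity $\{\zeta_{k}\}$. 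Multiplying \eqref{1.91}--\eqref{1.95} by $\zeta_{k}$, freezing the coefficients $b^{\pm},A,h_{i}^{\pm}$ at the centre of the $k$-th ball and locally straightening the relevant piece of $\Gamma$ or $\Gamma^{\pm}$, one obtains for $\zeta_{k}u^{\pm}$ and $\zeta_{k}\rho$ exactly the model problems of §3 (near $\Gamma$), the half-space Neumann/Dirichlet problems of Lemmas~\ref{L3},~\ref{L3a} (near $\Gamma^{\pm}$), or the whole-space Poisson equation (interior), whose right-hand sides are the localized data $\zeta_{k}f_{i}$ plus commutators $[\triangle,\zeta_{k}]u^{\pm}$, $[\triangle_{\Gamma},\zeta_{k}]\rho$ and terms carrying the difference of the frozen and the true coefficients. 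Applying Theorem~\ref{T2}, Lemmas~\ref{L3},~\ref{L3a} and interior Schauder to each piece and summing over $k$ gives
\[
|u^{\pm}|_{E^{2+\alpha}}+|\rho|_{P^{2+\alpha}}+\varepsilon|\rho|_{P^{3+\alpha}}\le C\big(\mathcal{M}_{T}+\delta\,(|u^{\pm}|_{E^{2+\alpha}}+|\rho|_{P^{2+\alpha}})+|u^{\pm}|_{E^{1+\alpha}}+|\rho|_{P^{1+\alpha}}\big),
\]
where $\delta$ is controlled by the oscillation of the coefficients over the supports of $\zeta_{k}$ and hence tends to $0$ as the covering is refined, so the $\delta$-term is absorbed into the left-hand side. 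The crucial feature is that the constant $C$ inherits the $\varepsilon$-independence of the constant in \eqref{1.52}, since the only $\varepsilon$-dependent ingredient used near $\Gamma$ is Theorem~\ref{T2}.

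The lower-order norms $|u^{\pm}|_{E^{1+\alpha}}+|\rho|_{P^{1+\alpha}}$ would then be disposed of in the standard way: by interpolation they are bounded by $\mu\,(|u^{\pm}|_{E^{2+\alpha}}+|\rho|_{P^{2+\alpha}})+C_{\mu}(\max|u^{\pm}|^{(0)}+\max|\rho|^{(0)})$, the first part being absorbed; the remaining sup-norms are handled by splitting $[0,T]$ into finitely many subintervals whose length depends only on the fixed data, using that all functions lie in the dotted classes (vanish at $t=0$ with all admissible $t$-derivatives), which on each short subinterval supplies a factor $T'^{\,\gamma}<1$ in front of those sup-norms via inequalities of the type \eqref{2.23}; patching the subintervals yields \eqref{1.99} with $C_{T}$ depending on $T$ but not on $\varepsilon$. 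Because the problem is linear, this estimate also gives uniqueness for every $\varepsilon\ge0$.

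For existence with $\varepsilon>0$ I would run the method of continuity along the family obtained by replacing $(b^{\pm},A,h_{i}^{\pm})$ by $(sb^{\pm}+(1-s)\nu,\,sA+(1-s)\nu,\,s h_{i}^{\pm})$ and flattening $\Gamma,\Gamma^{\pm}$ continuously in $s\in[0,1]$: at $s=0$ the problem decouples into the constant-coefficient model problems that are solved explicitly in §3 (by the Fourier--Laplace representation) and by classical potentials, so it is solvable in $\dot{E}^{2+\alpha}\times\dot{E}^{2+\alpha}\times\dot{P}^{3+\alpha}$; the a~priori estimate above applies uniformly along the whole family (nothing about the coefficients beyond the assumed bounds was used), so the set of admissible $s$ is open and closed in $[0,1]$, hence all of it. This proves solvability of \eqref{1.91}--\eqref{1.95} for every $\varepsilon>0$, with \eqref{1.99} uniform for $\varepsilon\in(0,1]$.

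Finally, for $\varepsilon=0$ the model problem of §3 remains meaningful --- the kernel degenerates to $K_{0}=G$, which still satisfies \eqref{A5.4} --- so both the a~priori estimate and the continuity argument go through with $\varepsilon=0$, giving a solution in $\dot{E}^{2+\alpha}\times\dot{E}^{2+\alpha}\times\dot{P}^{2+\alpha}$ satisfying \eqref{1.99} at $\varepsilon=0$; alternatively one may pass to the limit $\varepsilon\to0^{+}$, since \eqref{1.99} makes $\{u^{\pm}_{\varepsilon},\rho_{\varepsilon}\}$ precompact in $E^{2+\alpha'}\times P^{2+\alpha'}$ for any $\alpha'<\alpha$, while $\varepsilon\triangle_{\Gamma}\rho_{\varepsilon}\to0$ in $E^{1+\alpha'}$ because interpolation between the $P^{2+\alpha}$- and the $\varepsilon P^{3+\alpha}$-bounds gives $\varepsilon|\rho_{\varepsilon}|_{P^{3+\alpha'}}\le C\varepsilon^{\alpha-\alpha'}\to0$, the limit solving the $\varepsilon=0$ problem and then being lifted back to $E^{2+\alpha}\times P^{2+\alpha}$ by a local Schauder bootstrap. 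The main obstacle, and the technically heaviest part, is the a~priori estimate itself: the localization and the absorption of lower-order terms must be carried out inside the anisotropic spaces $E^{k+\alpha,\alpha}$ and $P^{k+\alpha}$, whose mixed seminorm $[\,\cdot\,]^{(\alpha,\alpha)}$ forces the difference-quotient device already employed in §3, and every constant must be kept free of $\varepsilon$, which is possible only because Theorem~\ref{T2} was established with an $\varepsilon$-independent constant.
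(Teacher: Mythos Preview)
Your overall strategy --- localization plus model estimates for the a~priori bound, then existence, then $\varepsilon\to0$ --- matches the paper's architecture, and your existence argument via the method of continuity is a legitimate alternative to the paper's contraction argument (the paper instead freezes $\rho$ on the right of a transmission problem \eqref{A6.17}--\eqref{A6.21}, solves for $u^{\pm}$, feeds $\partial u^{+}/\partial\overrightarrow{n}$ into the parabolic equation \eqref{A6.22}, and shows the resulting map on $\rho$ is a contraction for small $T$). The $\varepsilon=0$ limit is handled essentially the same way in both.

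There is, however, a real gap in your treatment of the lower-order remainder in the a~priori estimate. You claim that $|u^{\pm}|_{E^{1+\alpha}}$ can be interpolated as $\mu|u^{\pm}|_{E^{2+\alpha}}+C_{\mu}|u^{\pm}|^{(0)}$, after which the sup-norm is killed by the short-time factor coming from the dotted class. This interpolation is \emph{false} in the $E$-scale: by \eqref{A.2} both $|u|_{E^{1+\alpha}}$ and $|u|_{E^{2+\alpha}}$ contain the seminorm $\langle u\rangle_{t}^{(\alpha)}$ at the same level, and since the equations for $u^{\pm}$ are elliptic there is no higher time regularity available to interpolate against. After you interpolate everything that \emph{can} be interpolated, the irreducible residual is precisely $\langle u^{+}\rangle_{t}^{(\alpha)}+\langle u^{-}\rangle_{t}^{(\alpha)}$, as in the paper's estimate \eqref{A6.1}; and this term gains no smallness on short subintervals. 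The paper flags this explicitly (``the H\"older constants \ldots\ cannot be estimated by the interpolation in the space $\dot{E}^{2+\alpha}$'') and closes the estimate by a different device: it passes to the time-difference quotients $u_{h}^{\pm}$, writes the resulting transmission problem \eqref{A6.4}--\eqref{A6.8}, multiplies by $a^{\pm}u_{h}^{\pm}$ and integrates by parts to obtain the energy identity \eqref{A6.11}, and uses the compact embedding $H^{2+\alpha}(\overline{\Omega}^{\pm})\hookrightarrow L_{2}(\Omega^{\pm})$ via \eqref{A6.2}--\eqref{A6.3} to bound $\sup_{h}|u_{h}^{\pm}|^{(0)}$ (hence $\langle u^{\pm}\rangle_{t}^{(\alpha)}$) by $\mu$ times the top norm plus $C_{\mu}$ times quantities that \emph{do} carry a factor $T^{\lambda}$. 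This $L_{2}$ energy step uses the specific sign structure of the jump conditions and of $b^{\pm}\geq\nu>0$; it is not a generic interpolation and is the missing ingredient in your outline.
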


\begin{proof}
First, we prove estimate \eqref{1.99}, by assuming the availability
of a solution of problem \eqref{1.91}--\eqref{1.95} from the
appropriate class. The following lemma is valid.

\begin{lemma} \label{L4.1}
For any solution of problem \eqref{1.91}--\eqref{1.95} from the
class $u^{\pm}\in \dot{E}^{2+\alpha}(\overline{\Omega}_{T}^{\pm}),$
$\rho \in \dot{P}^{3+\alpha}(\Gamma_{T}),$ estimate \eqref{1.99} is
valid at $\varepsilon>0.$
\end{lemma}

\begin{proof}
By using the standard Schauder technique for estimates and by
considering the results of Section \ref{s3} on properties of the
model problems corresponding to points of the boundary $\Gamma,$ we
obtain, in the ordinary manner, the following \textit{a priori}
estimate of the solution of problem \eqref{1.91}--\eqref{1.95}:
\begin{equation}
|  u^{+}| _{E^{2+\alpha}(\overline{\Omega}_{T}^{+})}+| u^{-}|
_{E^{2+\alpha}(\overline{\Omega}_{T}^{-})}+\left| \rho\right|
_{P^{2+\alpha}(\Gamma_{T})}+\varepsilon\left| \rho\right|
_{P^{2+\alpha }(\Gamma_{T})}\\
\leq C\mathcal{M}_{T}+C\big(  \left\langle u^{+}\right\rangle _{t,\Omega_{T}^{+}%
}^{(\alpha)}+\left\langle u^{-}\right\rangle
_{t,\Omega_{T}^{-}}^{(\alpha
)}\big).  \label{A6.1}%
\end{equation}
Whereas $|u^{\pm}|^{(0)}_{\Omega_{T}^{\pm}}\leq
CT^{\alpha}\left\langle u^{\pm}\right\rangle
_{t,\Omega_{T}^{\pm}}^{(\alpha)},$ the H\"{o}lder constants
$\left\langle u^{+}\right\rangle _{t,\Omega_{T}^{+}}^{(\alpha)} $
and $\left\langle u^{-}\right\rangle _{t,\Omega_{T}^{-}}^{(\alpha)}$
for the functions $u^{\pm}(x,t)$ with respect to the variable $t$
cannot be estimated by the interpolation in the space
$\dot{E}^{2+\alpha}(\overline{\Omega}_{T}^{\pm}).$ In order to
estimate these H\"{o}lder constants, we consider the functions
($h\in (0,1)$)
\[
u_{h}^{\pm}=\frac{u^{\pm}(x,t)-u^{\pm}(x,t-h)}{h^{\alpha}}.
\]
and estimate their modulus maximum $\left|  u_{h}^{\pm}\right|
_{\overline{\Omega}_{T}^{\pm}}^{(0)}$ uniformly in $h.$ We take into
account that $u_{h}^{\pm}\in H^{2+\alpha}(\overline{\Omega}^{\pm}),$
$t\in\lbrack0,T],$ and the space
$H^{2+\alpha}(\overline{\Omega}^{\pm})$ is compactly embedded in the
space $L_{\infty} (\overline{\Omega}^{\pm}) \subset L_{2}
(\overline{\Omega}^{\pm}).$ Therefore, for any $\delta>0$ and $t\in
[0,T],$ we have the inequality (see \cite{Lions})
\begin{equation}
\left|  u_{h}^{\pm}\right|
_{\overline{\Omega}^{\pm}}^{(0)}\leq\delta\left| u_{h}^{\pm}\right|
_{\overline{\Omega}^{\pm}}^{(2+\alpha)}+C_{\delta}\left\|
u_{h}^{\pm}\right\|  _{2,\overline{\Omega}^{\pm}},\label{A6.2}%
\end{equation}
where $\left\|  u_{h}^{\pm}\right\| _{2,\overline{\Omega}^{\pm}}$ is
the $L_{2}$-norm of the functions $u_{h}^{\pm}.$ Relation
\eqref{A6.2} yields
\begin{equation}
\left|  u_{h}^{\pm}\right| _{\overline{\Omega}_{T}^{\pm}}^{(0)}\leq
\delta\left|  u^{\pm}\right|  _{E^{2+\alpha}(\overline{\Omega}_{T}^{\pm}%
)}+C_{\delta}\max_{t\in\lbrack0,T]}\left\|  u_{h}^{\pm}\right\|
_{2,\overline
{\Omega}^{\pm}}.\label{A6.3}%
\end{equation}
Thus, we need to estimate the quantity $\max_{t\in[0,T]}\left\|
u_{h}^{\pm}\right\| _{2,\overline{\Omega}^{\pm}}.$

Without any loss of generality, we can consider that
$f^{\pm}_{4}\equiv 0,$ since these functions can be extended inward
$\overline{\Omega}^{\pm}_{T}$ with the preservation of a class. Then
we can consider new unknown functions $v^{\pm}=u^{\pm}-f^{\pm}_{4}$
that satisfy the same problem with the same estimate of the
right-hand sides. Thus, by taking $f^{\pm}_{4}\equiv 0$ without any
loss of generality and by subtracting relation \eqref{1.93} for the
sign ``$-$'' from the same relation for the sign ``$+$'', we get a
problem for the functions $u^{\pm}_{h}$:
\begin{equation}
-\bigtriangleup u_{h}^{\pm}+b^{\pm}u_{h}^{\pm}=F_{1}^{\pm}\equiv f_{1h}^{\pm}%
-b_{h}^{\pm}\widetilde{u}^{\pm},\quad(x,t)\in\Omega_{T}^{\pm},\label{A6.4}%
\end{equation}%
\begin{equation}
u_{h}^{+}-u_{h}^{-}=F_{2}\equiv-A\rho_{h}-A_{h}\widetilde{\rho}+f_{2h}%
,\quad(x,t)\in\Gamma_{T},\label{A6.5}%
\end{equation}%
\begin{equation}
a^{+}\frac{\partial
u_{h}^{+}}{\partial\overrightarrow{n}}-a^{-}\frac{\partial
u_{h}^{-}}{\partial\overrightarrow{n}}=F_{3} \equiv
f_{3h}^{+}-f_{3h}^{-}+\overrightarrow{H}\nabla_{\omega}\rho
_{h}+\overrightarrow{H}_{h}\nabla_{\omega}\widetilde{\rho},\quad
(x,t)\in
\Gamma_{T},\label{A6.6}%
\end{equation}%
\begin{equation}
u_{h}^{\pm}=0,\quad(x,t)\in\Gamma_{T}^{\pm},\label{A6.7}%
\end{equation}%
\begin{equation}
u_{h}^{\pm}(x,0)=0,\quad x\in\overline{\Omega}^{\pm},\label{A6.8}%
\end{equation}
where the lower index $h$ of the designation of functions means the
corresponding difference relation,
$\widetilde{u}^{\pm}(x,t)=u^{\pm}(x,t-h),$   $\widetilde{\rho
}(x,t)=\rho(x,t-h),$ $\overrightarrow{H}=\{h_{i}^{+}-h_{i}^{-}\}.$

Let us multiply Eqs. \eqref{A6.4} by the functions
$a^{\pm}u_{h}^{\pm},$ respectively, and integrate by parts over the
domains $\Omega^{\pm}.$ With regard for the direction of the normal
$\overrightarrow{n}$ to the boundary $\Gamma,$ we obtain
\begin{equation}
a^{\pm}\int\limits_{\Omega^{\pm}}\left(  \nabla u_{h}^{\pm}\right)  ^{2}%
dx+a^{\pm}\int\limits_{\Omega^{\pm}}b^{\pm}\left(
u_{h}^{\pm}\right) ^{2}dx \\
\pm \int\limits_{\Gamma}u_{h}^{\pm}\Big(  a^{\pm}\frac{\partial u_{h}^{\pm}%
}{\partial\overrightarrow{n}}\Big)\,  dS=a^{\pm}\int\limits_{\Omega^{\pm}}%
u_{h}^{\pm}F_{1}^{\pm}\,dx.\label{A6.9}%
\end{equation}
Since the relation $u_{h}^{+}=u_{h}^{-}+F_{2}$ is satisfied on the
surface $\Gamma$\, relation \eqref{A6.9} for the sign $'+'$ can be
presented in the form
\begin{multline}
a^{+}\int\limits_{\Omega^{+}}\left(  \nabla u_{h}^{+}\right)  ^{2}dx+a^{+}%
\int\limits_{\Omega^{+}}b^{+}\left(  u_{h}^{+}\right)  ^{2}dx+\int
\limits_{\Gamma}u_{h}^{-}\Big(  a^{+}\frac{\partial
u_{h}^{+}}{\partial \overrightarrow{n}}\Big)\,  dS\\
=a^{+}\int\limits_{\Omega^{+}}u_{h}^{+}F_{1}^{+}dx-\int\limits_{\Gamma}%
F_{2}\Big(  a^{+}\frac{\partial u_{h}^{+}}{\partial\overrightarrow{n}%
}\Big)\,  dS.\label{A6.10}%
\end{multline}
Adding \eqref{A6.10} and relation \eqref{A6.9} for the sign ``$-$''
and taking conditions \eqref{A6.6} into account, we get
\begin{multline}
a^{+}\int\limits_{\Omega^{+}}\left(  \nabla u_{h}^{+}\right)  ^{2}dx+a^{-}%
\int\limits_{\Omega^{-}}\left(  \nabla u_{h}^{-}\right)  ^{2}dx+a^{+}%
\int\limits_{\Omega^{+}}b^{+}\left(  u_{h}^{+}\right)  ^{2}dx+a^{-}%
\int\limits_{\Omega^{-}}b^{-}\left(  u_{h}^{+}\right)  ^{2}dx\\
=a^{+}\int\limits_{\Omega^{+}}u_{h}^{+}F_{1}^{+}dx+a^{-}\int\limits_{\Omega
^{-}}u_{h}^{-}F_{1}^{-}dx-\int\limits_{\Gamma}F_{2}\Big(
a^{+}\frac{\partial
u_{h}^{+}}{\partial\overrightarrow{n}}\Big)\,  dS-\int\limits_{\Gamma}%
u_{h}^{-}F_{3}\,dS.\label{A6.11}%
\end{multline}
We now estimate the terms on the right-hand side of \eqref{A6.11},
by using the Cauchy inequality with small parameter $\mu>0$:
\begin{equation}
\Bigg|\, \int\limits_{\Omega^{\pm}}u_{h}^{\pm}F_{1}^{\pm}\,dx\Bigg|
\leq\mu ^{2}\left\|  u_{h}^{\pm}\right\|
_{2,\Omega^{\pm}}^{2}+C_{\mu}\left\| F_{1}^{\pm}\right\|
_{2,\Omega^{\pm}}^{2}\\
\leq C\mu^{2}\left|  u^{\pm}\right|^2  _{E^{2+\alpha}(\overline{\Omega}_{T}%
^{\pm})}+C_{\mu}\big(  \left|  F_{1}^{\pm}\right|  _{\Omega_{T}^{\pm}}%
^{(0)}\big)  ^{2},\label{A6.12}%
\end{equation}%
\begin{equation}
\Bigg|\,  \int\limits_{\Gamma}F_{2}\Big(  a^{+}\frac{\partial u_{h}^{+}%
}{\partial\overrightarrow{n}}\Big) \, dS\Bigg| \leq\mu^{2}\Big\|
\frac{\partial u_{h}^{+}}{\partial\overrightarrow{n}}\Big\|  _{2,\Gamma}%
^{2}+C_{\mu}\left\|  F_{2}\right\|  _{2,\Gamma}^{2}\\
\leq C\mu^{2}\left|  u^{+}\right|^2  _{E^{2+\alpha}(\overline{\Omega}_{T}^{+}%
)}+C_{\mu}\big(  \left|  F_{2}\right|  _{\Gamma_{T}}^{(0)}\big)
^{2},\label{A6.13}%
\end{equation}%
\begin{equation}
\Bigg|\,  \int\limits_{\Gamma}u_{h}^{-}F_{3}\,dS\Bigg|  \leq
C\mu^{2}\left| u^{-}\right|^2
_{E^{2+\alpha}(\overline{\Omega}_{T}^{-})}+C_{\mu}\big(
\left|  F_{3}\right|  _{\Gamma_{T}}^{(0)}\big)  ^{2}.\label{A6.14}%
\end{equation}
We note also that, at $T<1,$
\[
\left|  \widetilde{u}^{\pm}\right|  _{\Omega_{T}^{\pm}}^{(0)}\leq
CT^{\alpha }\left\langle \widetilde{u}^{\pm}\right\rangle
_{t,\Omega_{T}^{\pm}}^{(\alpha
)}\leq CT^{\alpha}\left|  u^{\pm}\right|  _{E^{2+\alpha}(\overline{\Omega}%
_{T}^{\pm})},
\]%

\[
\left|  \widetilde{\rho}\right|  _{\Gamma_{T}}^{(0)}+\left|
\rho_{h}\right| _{\Gamma_{T}}^{(0)}\leq CT^{1-\alpha}\Big|
\frac{\partial\rho}{\partial t}\Big|  _{\Gamma_{T}}^{(0)}\leq
CT^{1-\alpha}\left|  \rho\right| _{P^{2+\alpha}(\Gamma_{T})},
\]%

\[
\left|  \nabla\rho_{h}\right|  _{\Gamma_{T}}^{(0)}\leq C\left\langle
\nabla\rho\right\rangle _{t,\Gamma_{T}}^{(\alpha)}\leq CT^{\frac{1}{2}%
}\left\langle \nabla\rho\right\rangle _{t,\Gamma_{T}}^{(\frac{1+\alpha}{2}%
)}\leq CT^{\frac{1}{2}}\left|  \rho\right|
_{P^{2+\alpha}(\Gamma_{T})}.
\]
Thus, the functions $F_{1}^{\pm},$ $F_{2},$ and $F_{3}$ in
\eqref{A6.4}--\eqref{A6.8} satisfy the estimate
\begin{multline}
\left|  F_{1}^{+}\right|  _{\overline{\Omega}_{T}^{+}}^{(0)}+\left|  F_{1}%
^{-}\right|  _{\overline{\Omega}_{T}^{-}}^{(0)}+\left| F_{2}\right|
_{\Gamma_{T}}^{(0)}+\left|  F_{3}\right| _{\Gamma_{T}}^{(0)}\\\leq
CT^{\lambda}\big(  \left|  u^{+}\right|  _{E^{2+\alpha}(\overline{\Omega}%
_{T}^{+})}+\left|  u^{-}\right|  _{E^{2+\alpha}(\overline{\Omega}_{T}^{-}%
)}+\left|  \rho\right|  _{P^{2+\alpha}(\Gamma_{T})}\big)  +C\mathcal{M}_{T}\label{A6.15}%
\end{multline}
with some $\lambda >0.$

We note that, by virtue of conditions \eqref{A6.7}, the inequality
\[
\int\limits_{\Omega^{\pm}}\left(  u_{h}^{\pm}\right)  ^{2}dx\leq
C\int \limits_{\Omega^{\pm}}\left(  \nabla u_{h}^{\pm}\right) ^{2}dx
\]
is valid. Then relations \eqref{A6.11}--\eqref{A6.15} yield
\begin{equation}
\max_{t\in\lbrack0,T]}\left\|  u_{h}^{\pm}\right\|
_{2,\Omega^{\pm}} \\
\leq C(\mu+C_{\mu}T^{\lambda})\big(\left|  u^{+}\right|  _{E^{2+\alpha}%
(\overline{\Omega}_{T}^{+})}+\left|  u^{-}\right|  _{E^{2+\alpha}%
(\overline{\Omega}_{T}^{-})}+\left|  \rho\right|  _{P^{2+\alpha}(\Gamma_{T}%
)}\big)\\+C_{\mu}\mathcal{M}_{T}.\label{A6.16}%
\end{equation}
By choosing firstly $\mu$ and then $T$ to be sufficiently small and
by joining estimates \eqref{A6.16}, \eqref{A6.3}, and \eqref{A6.1},
we obtain that estimate \eqref{1.99} is satisfied on some interval
$[0,T]$ independent of the values of the right-hand sides of the
problem.

By moving now upward along the axis $t$ step-by-step, as it was made
in \cite[Chap.~IV]{D10}, we prove estimate \eqref{1.99} on any
finite time interval $[0,T].$

Thus, Lemma \ref{L4.1} and estimate \eqref{1.99} are proved.
\end{proof}


Let us continue the proof of Theorem \ref{T3}. We write problem
\eqref{1.91}--\eqref{1.95} in the form
\begin{equation}
-\bigtriangleup
u^{\pm}+b^{\pm}(x,t)u^{\pm}=f_{1}^{\pm},\quad(x,t)\in\Omega
_{T}^{\pm},\label{A6.17}%
\end{equation}%
\begin{equation}
u^{+}-u^{-}=-A(x,t)\rho+f_{2},\quad(x,t)\in\Gamma_{T},\label{A6.18}%
\end{equation}%
\begin{equation}
a^{+}\frac{\partial
u^{+}}{\partial\overrightarrow{n}}-a^{-}\frac{\partial
u^{-}}{\partial\overrightarrow{n}}=f_{3}^{+}-f_{3}^{-}+\overrightarrow
{H}\nabla\rho,\quad(x,t)\in\Gamma_{T},\label{A6.19}%
\end{equation}%
\begin{equation}
u^{\pm}=f_{4}^{\pm},\quad(x,t)\in\Gamma_{T}^{\pm},\label{A6.20}%
\end{equation}%
\begin{equation}
u^{\pm}(x,0)=0,\quad x\in\overline{\Omega}^{\pm},\label{A6.21}%
\end{equation}%
\begin{equation}
\rho_{t}-\varepsilon\bigtriangleup_{\Gamma}\rho=f_{3}^{+}-a^{+}\frac{\partial
u^{+}}{\partial\overrightarrow{n}}-\sum_{i}h_{i}^{+}\rho_{\omega_{i}}%
,\quad\rho(\omega,0)=0.\label{A6.22}%
\end{equation}

It follows from results in \cite{LRU,SolEl} that, for the given
function $\rho\in E^{2+\alpha}(\Gamma_{T})$ on the right-hand sides
of relations \eqref{A6.17}--\eqref{A6.20}, the problem of
conjugation \eqref{A6.17}--\eqref{A6.20} has the unique solution
that satisfies the estimate
\begin{equation}
\left|  u^{+}\right|
_{E^{2+\alpha}(\overline{\Omega}_{T}^{+})}+\left| u^{-}\right|
_{E^{2+\alpha}(\overline{\Omega}_{T}^{-})}\leq
C\mathcal{M}_{T}+C\left|
\rho\right|  _{E^{2+\alpha}(\Gamma_{T})},\label{A6.23}%
\end{equation}
so that
\begin{equation}
\left|  \nabla u^{+}\right|
_{E^{1+\alpha}(\overline{\Omega}_{T}^{+})}\leq
C\mathcal{M}_{T}+C\left|  \rho\right|  _{E^{2+\alpha}(\Gamma_{T})}.\label{A6.24}%
\end{equation}
We note that the results in \cite{LRU,SolEl} concern the spaces
$H^{2+\alpha}(\overline{\Omega}),$ but the transition to the spaces
$E^{2+\alpha}(\overline{\Omega}_{T})$ is realized simply by the
consideration of the appropriate problem for the functions
$u_{h}^{\pm}=( u_{h}^{\pm}(x,t) -u_{h}^{\pm}(x,t-h)) /h^{\alpha}.$

Thus, we have correctly defined the operator $L_{\varepsilon}: \rho
\rightarrow L_{\varepsilon}\rho$ that puts each function $\rho\in
E^{2+\alpha}(\Gamma_{T})$ given on the right-hand sides of relations
\eqref{A6.17}--\eqref{A6.20} and \eqref{A6.22} in correspondence
with the function $L_{\varepsilon}\rho$ that is the solution of the
Cauchy problem \eqref{A6.22} with the function $\rho$ and the
function $\partial u^{+}/\partial \overrightarrow{n}$ determined by
$\rho$ that are given on the right-hand side of \eqref{A6.22}.

Relations \eqref{A6.23} and \eqref{A6.24} and results in \cite{M}
yield
\begin{multline}
\left|  L_{\varepsilon}\rho\right| _{P^{3+\alpha}(\Gamma_{T})}\leq
C_{\varepsilon}\big(  \left|  \nabla u^{+}\right|  _{E^{1+\alpha}%
(\overline{\Omega}_{T}^{+})}+\left|  \nabla\rho\right|  _{E^{1+\alpha}%
(\Gamma_{T})}+\left|  f_{3}^{+}\right|
_{E^{1+\alpha}(\Gamma_{T})}\big) \\
\leq C_{\varepsilon}\big(  \left|  f_{3}^{+}\right|  _{E^{1+\alpha}%
(\Gamma_{T})}+\left|  \rho\right| _{E^{2+\alpha}(\Gamma_{T})}\big)
,\label{A6.25}%
\end{multline}
and, in addition, for $\rho_{1,}\rho_{2}\in E^{2+\alpha}(\Gamma_{T}),$%
\begin{equation}
\left|  L_{\varepsilon}\rho_{2}-L_{\varepsilon}\rho_{1}\right|
_{P^{3+\alpha }(\Gamma_{T})}\leq C_{\varepsilon}\left|
\rho_{2}-\rho_{1}\right|
_{E^{2+\alpha}(\Gamma_{T})}.\label{A6.26}%
\end{equation}

It follows from results in \cite{Lun,SolIn1,SolIn2} about the
interpolation in H\"{o}lder spaces that the quantity
\begin{equation}
\left[  D_{x}^{2}\rho\right]
_{x,t,\Gamma_{T}}^{(\alpha,\frac{1}{2}+\alpha
)}\leq C\left|  \rho\right|  _{P^{3+\alpha}(\Gamma_{T})}\label{A6.27}%
\end{equation}
is finite. Hence, since $\rho(x,0)=0,$ $\rho_{t}(x,0)=0,$ we have
\begin{equation}
\left[  D_{x}^{2}\rho\right] _{x,t,\Gamma_{T}}^{(\alpha,\alpha)}\leq
CT^{\frac{1}{2}}\left[ D_{x}^{2}\rho\right]
_{x,t,\Gamma_{T}}^{(\alpha ,\frac{1}{2}+\alpha)}\leq
CT^{\frac{1}{2}}\left|  \rho\right| _{P^{3+\alpha
}(\Gamma_{T})}.\label{A6.28}%
\end{equation}
Analogous inequalities with the factor $T^{\lambda}$ are valid also
for other terms in the definition of the norm $\rho$ in the space
$E^{2+\alpha}(\Gamma_{T}).$ For $\rho_{1,}\rho_{2}\in
E^{2+\alpha}(\Gamma_{T}),$ this result and relation \eqref{A6.26}
yield
\begin{equation}
\left|  L_{\varepsilon}\rho_{2}-L_{\varepsilon}\rho_{1}\right|
_{E^{2+\alpha }(\Gamma_{T})}\leq C_{\varepsilon}T^{\lambda}\left|
\rho_{2}-\rho_{1}\right|
_{E^{2+\alpha}(\Gamma_{T})}.\label{A6.29}%
\end{equation}
Thus, by choosing $T$ to be sufficiently small, we obtain that the
operator $L_{\varepsilon}$ is a contraction one on
$E^{2+\alpha}(\Gamma_{T})$ and, hence, has the single fixed point.
Together with \eqref{A6.25}, this yields a solution of problem
\eqref{1.91}--\eqref{1.95} on some interval $[0,T]$ independent of
the values of the right-hand sides of the problem. By moving upward
along the axis $t,$ as it was made in \cite[Chap.~IV]{D10}, we
obtain the solution of problem \eqref{1.91}--\eqref{1.95} from the
required class an any finite time interval. The estimate of the
solution was proved above in Lemma \ref{L4.1}.

Thus, we prove the assertion of the theorem for $\varepsilon>0.$ We
now transit to the limit as $\varepsilon \rightarrow 0.$ We note
that, by virtue of estimate \eqref{1.99}, the sequence
$u^{\pm}_{\varepsilon},$ $\rho_{\varepsilon}$ is compact in the
spaces $E^{2+\beta}(\overline{\Omega}^{\pm}_{T})$ and
$P^{2+\beta}(\Gamma_{T})$ with any $\beta<\alpha.$ Hence, we can
separate a subsequence $u^{\pm}_{\varepsilon_{n}}\rightarrow
u^{\pm},$ $\rho_{\varepsilon_{n}}\rightarrow \rho$ that converges in
these spaces, and the functions $u^{\pm}$ and $\rho$ present the
solution of problem \eqref{1.91}--\eqref{1.95} at $\varepsilon=0.$
Indeed, in view of the available estimate, the limit transition is
possible in each of the relations. In addition, the limiting
functions will belong to the same spaces
$E^{2+\alpha}(\overline{\Omega}^{\pm}_{T})$ and
$P^{2+\alpha}(\Gamma_{T}),$ because, by virtue of the estimate
uniform, for example, in $\varepsilon,$
\[
\Big|  \frac{D_{x}^{2}u_{\varepsilon_{n}}^{+}(x+\overrightarrow{l}%
,t)-D_{x}^{2}u_{\varepsilon_{n}}^{+}(x,t)}{\big|
\overrightarrow{l}\big|
^{\alpha}}\Big|  \leq C\mathcal{M}_{T},%
\]
we can transit to the limit in this inequality as
$\varepsilon_{n}\rightarrow 0$ due to the uniform convergence of the
functions $D^{2}_{x}u^{+}$ on $\overline{\Omega}^{+}_{T}.$ This
yields
\[
\left\langle D_{x}^{2}u_{\varepsilon_{n}}^{+}(x,t)\right\rangle
_{x,\overline {\Omega}_{T}^{+}}^{(\alpha)}\leq CM_{T}.
\]
The remaining estimates are analogous.

Eventually, the uniqueness of the solution that was obtained by the
limit transition follows directly from estimate \eqref{1.99}.

Thus, Theorem \ref{T3} is proved.
\end{proof}

\section{A nonlinear problem: the proof of Theorem \ref{T1}} \label{s5}

The proof of Theorem \ref{T1} is based on Theorem \ref{T3} and a
representation of the problem under consideration in the form
\eqref{1.31}--\eqref{1.35}. We now define a nonlinear operator
$\mathcal{F}(\psi),$ $\psi=(v^{+},v^{-},\delta)$ in
\eqref{1.31}--\eqref{1.35} that puts every given $\psi$ on the
nonlinear right-hand sides of relations \eqref{1.31}--\eqref{1.35}
in correspondence with the solution of the linear problem determined
by the left-hand sides of these relations. In this case, Theorem
\ref{T3} and Lemma~\ref{L1} imply that the operator
$\mathcal{F}(\psi)$ possesses the following properties on a ball
$\mathcal{B}_{r}=\{\psi: \|\psi\|\leq r\}\subset \mathcal{H}$:
\begin{equation}
\|\mathcal{F}(\psi)\|_{\mathcal{H}}\leq
C(T^{\alpha/2}+r)\|\psi\|_{\mathcal{H}}, \label{5.1}
\end{equation}
\begin{equation}
\|\mathcal{F}(\psi_{1})-\mathcal{F}(\psi_{2})\|_{\mathcal{H}}\leq
C(T^{\alpha/2}+r)\|\psi_{1}-\psi_{2}\|_{\mathcal{H}}. \label{5.2}
\end{equation}
It is easy to see that relations \eqref{5.1} and \eqref{5.2} imply
that, at sufficiently small $T$ and $r,$ the operator
$\mathcal{F}(\psi)$ maps the closed ball $\mathcal{B}_{r}$ into
itself and is a contraction operator there. The single fixed point
of this operator gives the solution of the initial nonlinear problem
with free boundary that is related to Theorem \ref{T1}. Thus,
Theorem \ref{T1} is proved.




\begin{thebibliography}{99}

\bibitem{D17} B. V. Bazalii, I. I. Danilyuk, and S. P. Degtyarev,
``Classical solvability of a multidimensional nonstationary
filtration problem with free boundary,'' \textit{Dokl. AN UkrSSR,
Ser.~A}, No. 2, 9--15 (1987).

\bibitem{D9} B. V. Bazalii and S. P. Degtyarev,
``Classical solvability of a multidimensional Stefan problem at the
convective motion of a viscous noncompressible fluid,'' \textit{Mat.
Sb.}, {\bf 132(174)}, No. 1, 3--19 (1987).


\bibitem{BS} G. I. Bizhanova and V. A. Solonnikov,
``On problems with free boundaries for parabolic second-order
equations,'' \textit{Alg. Analiz}, {\bf 12}, Iss.~6, 98--139 (2000).


\bibitem{D3} I. I. Danilyuk, ``Nonstationary filtration in a dispersion system of two
barotropic media,'' \textit{Dokl. AN UkrSSR, Ser.~A}, No. 1, 14--18
(1985).

\bibitem{D4} I. I. Danilyuk, ``On the joint nonstationary filtration of a gas and a
dispersion system of barotropic media,'' \textit{Dokl. AN UkrSSR,
Ser.~A}, No. 11, 9--13 (1985).

\bibitem{DN4}  Fahuai Yi, ``Local classical solution of Muskat free boundary problem,''
\textit{J. Partial Differ. Eq.}, {\bf 9}, No. 1, 84--96 (1996).

\bibitem{DN5}  Fahuai Yi, ``Global classical solution of Muskat free boundary problem,''
\textit{J. Math. Anal. Appl.}, {\bf 288}, No. 2, 442--461 (2003).

\bibitem{D6} V. N. Gusakov and S. P. Degtyarev,
``Existence of a smooth solution of the filtration problem,''
\textit{Ukr. Mat. Zh.}, {\bf 41}, No. 9, 1192--1198 (1989).

\bibitem{D8} E.-I. Hanzawa, ``Classical solutions of the Stefan problem,''
\textit{Tohoku Math. J.}, {\bf 33}, 297--335 (1981).


\bibitem{LRU} O. A. Ladyzhenskaya, V. Ya. Rivkind, and N. N. Ural'tseva,
``On the classical solvability of the diffraction problem,''
\textit{Trudy Mat. Inst. V. A. Steklova}, {\bf 92}, 116--146 (1966).

\bibitem{D10} O. A. Ladyzhenskaya, V. A. Solonnikov, and N. N. Ural'tseva,
\textit{Linear and Quasilinear Equations of Parabolic Type}, Amer.
Math. Soc., Providence, RI, 1968.

\bibitem{D16} O. A. Ladyzhenskaya and N. N. Ural'tseva,
\textit{Linear and Quasilinear Equations of Elliptic Type}, Academic
Press, New York, 1968.

\bibitem{Lions} J.-L. Lions, \textit{Quelques Methodes de Resolution des
Problems aux Limites Non Lineaires}, Gauthier--Villars, Paris, 1969.

\bibitem{DN3} Longfeng Xu, ``A Verigin problem
with kinetic condition,'' \textit{Appl. Math. Mech.}, {\bf 18}, No.
2, 191--199 (1997).

\bibitem{Lun} A.~Lunardi, \textit{Analytic Semigroups and Optimal Regularity in Parabolic
Problems}, Birkh\"auser, Basel, 1995.



\bibitem{D5} A. M. Meiermanov, ``The problem of motion of the contact rupture surface at
at the filtration of unmixed compressible fluids,'' \textit{Sib.
Mat. Zh.}, {\bf 23}, No. 1, 85--103 (1982).

\bibitem{M} A. M. Meiermanov,
``On the classical solution of a multidimensional Stefan problem for
quasilinear parabolic equations,'' \textit{Mat. Sb.}, {\bf
112(154)}, No. 2(6), 170--192 (1980).


\bibitem{D1} M. Muskat, \textit{The Flow of Homogeneous Fluids through Porous
Media}, McGraw-Hill, New York, 1937.

\bibitem{D7} V. A. Solonnikov,
``Solvability of the problem of motion of a viscous noncompressible
fluid bounded by a free surface,'' \textit{Izv. AN SSSR, Ser. Mat.},
{\bf 41}, No. 6, 1388--1424 (1977).

\bibitem{SolEl} V. A. Solonnikov, ``On general boundary-value problems for the systems elliptic
in the Douglis--Nirenberg meaning,'' \textit{Trudy Mat. Inst. V. A.
Steklova}, {\bf 92}, 233--297 (1966).

\bibitem{SolIn1} V. A. Solonnikov, ``A priori estimates for second-order equations of the
parabolic type,'' \textit{Trudy Mat. Inst. V. A. Steklova}, {\bf
70}, 133--212 (1964).

\bibitem{SolIn2} V. A. Solonnikov, ``Estimates of the solutions of a nonstationary linearized system
of the Navier--Stokes equations,'' \textit{Trudy Mat. Inst. V.A.
Steklova}, {\bf 70}, 213--317 (1964).

\bibitem{St} E. M. Stein, \textit{Singular Integrals and Differentiability Properties of Functions},
Princeton Univ. Press, Princeton, 1970.


\bibitem{D2} N. N. Verigin, ``On a class of hydromechanical problems for domains with
movable boundaries,'' in \textit{Dynamics of a Fluid with Free
Boundaries} [in Russian], Institute of Hydrodynamics of SD RAS,
Novosibirsk, 1980, 23--33.

\bibitem{DN1} Youshan Tao and Fahuai Yi,
``Classical Verigin problem as a limit case of Verigin problem with
surface tention at free boundary,'' \textit{Appl. Math.-JCU}, {\bf
11B}, 307--322 (1996).

\bibitem{DN2} Youshan Tao, ``Classical solution of Verigin problem
with surface tention,'' \textit{Chin. Ann. Math., Ser.~B}, {\bf 18},
 No. 3, 393--404 (1997).



\end{thebibliography}
\end{document}